\newcommand{\I}[1]{(\SF{#1})}
\newcommand{\SF}[1]{\mathsf{#1}}
\newcommand{\BBP}[1]{\BB{P#1}}
\newcommand{\ETR}{\mathbf{ETR}}
\newcommand{\ETK}{\mathbf{ET\BB{K}}}
\newcommand{\GCI}{\mathbf{GCI}}
\newcommand{\NP}{\mathbf{NP}}
\renewcommand{\tilde}[1]{\widetilde{#1}}
\newcommand\IP[1]{%
  \readlist*\mylist{#1}%
  \langle\mylist[1], \mylist[2]\rangle%
}
\newcommand\EIP[1]{%
  \readlist*\mylist{#1}%
  \llangle\mylist[1], \mylist[2]\rrangle%
}
\newcounter{xcoord}%
\newcommand\PC[1]{%
  \setsepchar{:}
  \ignoreemptyitems
  \readlist*\mylist{#1}%
  \left[%
    \setcounter{xcoord}{1}%
    \foreachitem\coord\in\mylist{%
      \coord%
      \ifthenelse{\thexcoord < \mylistlen}{\mathbin{:}}{}%
      \stepcounter{xcoord}%
    }%
  \right]%
}
\newcommand{\cross}{\times}
\newcommand{\oo}{\SF{o\mkern-2mu o}} 
\newcommand{\GF}{\mathrm{GF}}
\DeclareMathOperator{\sdeg}{sdeg}
\DeclareMathOperator{\rk}{rk}
\DeclareMathOperator{\colspan}{colspan}
\newtheorem*{namedthm}{\namedthmname}
\newcounter{namedthm}
\newenvironment{named}[1]
  {\def\namedthmname{#1}%
   \refstepcounter{namedthm}%
   \namedthm\def\@currentlabel{#1}}
  {\endnamedthm}
\newtheorem{question}[theorem]{Question}
\title{Incidence geometry in the projective plane via \\
almost-principal minors of symmetric matrices}
\author{Tobias Boege}
\address{Tobias Boege, OvGU Magdeburg, Germany}
\email{tobias.boege@ovgu.de}
\date{\today}
\subjclass[2020]{51A25, 62R01, 14P10, 05B20, 15A15}
\keywords{%
  projective plane,
  von~Staudt construction,
  symmetric matrix,
  gaussoid,
  conditional independence,
  implication problem,
  rationality,
  universality%
}
\begin{document}

\begin{abstract}
We present an encoding of a polynomial system into vanishing and
non-vanishing constraints on almost-principal minors of a symmetric,
principally regular matrix, such that the solvability of the system
over some field is equivalent to the satisfiability of the constraints
over that field.
This implies two complexity results about Gaussian conditional independence
structures. First, all real algebraic numbers are necessary to construct
inhabitants of non-empty Gaussian statistical models defined by
conditional independence and dependence constraints. This gives a negative
answer to a question of Petr Šimeček. Second, we prove that the implication
problem for Gaussian CI is polynomial-time equivalent to the existential
theory of~the~reals.
\end{abstract}

\maketitle

\section{Introduction}

Matroids were conceived by Whitney in the 1930s as combinatorial abstractions
of the common properties of independence relations in vector spaces and graphs.
Today, matroid theory is a broad and active field of research with an extensive
corpus of theorems and constructions. Among the jewels of this theory are the
\emph{universality theorems} of Sturmfels and Mnëv:
to every affine variety~$\C V$ over an algebraically closed field, there
exists a rank-3 matroid whose projective realization space over this
field is birationally isomorphic to~$\C V$~\cite[Theorem~4.30]{BokowskiSturmfels};
and to every basic primary semialgebraic set~$\C K$ over a real-closed field,
a rank-3 oriented matroid may be found whose realization space is stably
equivalent to~$\C K$~\cite[Theorem~8.6.6]{OrientedMatroids},~\cite{RichterGebert}.

Interpreting the term of ``universality'' more liberally, the idea is to
show that certain objects or properties associated to the combinatorial
structure of a matroid can be arbitrarily complicated, within the confines
of an obvious upper bound and modulo a notion of equivalence which blurs
the concrete object but not its complexity.
In this looser sense, the $\NP$-completeness of orientability of rank-3
matroids by Richter-Gebert~\cite{OrientabilityNP} may count as a universality
result. Notably, also one of the first contributions to matroid theory, in
a paper by MacLane~\cite{MacLane}, gives a universality result:
it shows how to construct for every finite algebraic extension $\BB K/\BB Q$
a rank-3 matroid which is realizable over a field $\BB L/\BB Q$ if and only
if there is an embedding of fields $\BB K \hookrightarrow \BB L$.

In one way or another, all of these theorems rest on the ability of simple
rank-3 matroids to capture, prescribe and forbid incidence relations between
points and lines in the projective plane. To be precise, consider a
$3 \times n$ matrix~$A$ realizing a simple rank-3 matroid. Its columns
contain the homogeneous coordinates of $n$ points in the projective plane.
The non-bases of the matroid are those triplets $pqr$ where the
$3 \times 3$ subdeterminant of $A$ with columns $p$, $q$ and $r$,
denoted by the bracket $[pqr]$, vanishes. This is equivalent for the
indexed points $pqr$ to be \emph{collinear} in the projective plane.
The~collinearity predicate on point triplets is a geometric primitive
which allows the design of arbitrary planar incidence relations.
For example, to construct the intersection point $r$ of two distinct
lines, given by two pairs of distinct points $(p,q)$ and $(p',q')$,
one requires that $[pqr] = 0$ and $[p'q'r] = 0$, i.e.\ $r$ is on the
line $p \vee q$ and also on $p' \vee q'$.
Oriented matroids refine this to a left-right-or-on relation between
points and lines, such as studied by Knuth~\cite{AxiomsHulls}.

\pagebreak 

Building on these constructions with a ``projective ruler'' one
can execute the von~Staudt constructions and gain access, through the
synthetic geometry language of matroids, to the algebraic structure
underlying the projective plane.
\Cref{fig:Configurations} shows two configurations of lines and
(intersection) points whose projective incidence relation (also taking
into account parallelity via intersection points at infinity) can only
be realized over fields $\BB K/\BB Q$ containing $\sqrt5$ or~$\sqrt2$,
respectively.
The von~Staudt technique is a systematic way to design incidence
relations for which constructibility requires solutions to arbitrary
polynomial systems with integer coefficients.

\begin{figure}
\vspace{-1.3em}
\scalebox{0.6}{%
\begin{tikzpicture}[y=0.80pt, x=0.80pt, yscale=-0.6, xscale=0.6, inner sep=0pt, outer sep=0pt]

\begin{scope}[rotate around={-90.0:(286.0,286.0)}]
  \path[cm={{1.0,0.0,0.0,1.0,(-66.0,-5.0)}},draw=black,line join=miter,line cap=round,miter limit=3.25,line width=1.240pt] (275.7070,53.0781) -- (275.7070,531.9219);
  \path[cm={{1.0,0.0,0.0,1.0,(-66.0,-5.0)}},draw=black,line join=miter,line cap=round,miter limit=3.25,line width=1.240pt] (275.7070,235.9805) -- (557.1641,144.5312);
  \path[cm={{1.0,0.0,0.0,1.0,(-66.0,-5.0)}},draw=black,line join=miter,line cap=round,miter limit=3.25,line width=1.240pt] (275.7070,53.0781) -- (557.1641,440.4688);
  \path[cm={{1.0,0.0,0.0,1.0,(-66.0,-5.0)}},draw=black,line join=miter,line cap=round,miter limit=3.25,line width=1.240pt] (275.7070,531.9219) -- (557.1641,144.5312);
  \path[cm={{1.0,0.0,0.0,1.0,(-66.0,-5.0)}},draw=black,line join=miter,line cap=round,miter limit=3.25,line width=1.240pt] (275.7070,349.0195) -- (557.1641,440.4688);
  \path[cm={{1.0,0.0,0.0,1.0,(-66.0,-5.0)}},draw=black,line join=miter,line cap=round,miter limit=3.25,line width=1.240pt] (275.7070,349.0195) -- (557.1641,144.5312);
  \path[cm={{1.0,0.0,0.0,1.0,(-66.0,-5.0)}},draw=black,line join=miter,line cap=round,miter limit=3.25,line width=1.240pt] (383.2148,383.9492) -- (275.7070,53.0781);
  \path[cm={{1.0,0.0,0.0,1.0,(-66.0,-5.0)}},draw=black,line join=miter,line cap=round,miter limit=3.25,line width=1.240pt] (275.7070,235.9805) -- (557.1641,440.4688);
  \path[cm={{1.0,0.0,0.0,1.0,(-66.0,-5.0)}},draw=black,line join=miter,line cap=round,miter limit=3.25,line width=1.240pt] (383.2148,201.0508) -- (275.7070,531.9219);
\end{scope}
\end{tikzpicture}
}
\hspace{3em}
\scalebox{0.6}{%
\begin{tikzpicture}[y=0.80pt, x=0.80pt, yscale=-0.5, xscale=0.5, inner sep=0pt, outer sep=0pt]

\begin{scope}
  \path[cm={{1.0,0.0,0.0,1.0,(-66.0,-5.0)}},draw=black,line join=miter,line cap=round,miter limit=3.25,line width=1.240pt] (73.1797,356.5312) -- (417.8203,356.5312);
  \path[cm={{1.0,0.0,0.0,1.0,(-66.0,-5.0)}},draw=black,line join=miter,line cap=round,miter limit=3.25,line width=1.240pt] (73.1797,356.5312) -- (245.5000,11.8945);
  \path[cm={{1.0,0.0,0.0,1.0,(-66.0,-5.0)}},draw=black,line join=miter,line cap=round,miter limit=3.25,line width=1.240pt] (417.8203,356.5312) -- (245.5000,11.8945);
  \path[cm={{1.0,0.0,0.0,1.0,(-66.0,-5.0)}},draw=black,line join=miter,line cap=round,miter limit=3.25,line width=1.240pt] (159.3398,184.2109) -- (331.6602,184.2109);
  \path[cm={{1.0,0.0,0.0,1.0,(-66.0,-5.0)}},draw=black,line join=miter,line cap=round,miter limit=3.25,line width=1.240pt] (159.3398,184.2109) -- (245.5000,356.5312);
  \path[cm={{1.0,0.0,0.0,1.0,(-66.0,-5.0)}},draw=black,line join=miter,line cap=round,miter limit=3.25,line width=1.240pt] (245.5000,356.5312) -- (331.6602,184.2109);
  \path[cm={{1.0,0.0,0.0,1.0,(-66.0,-5.0)}},draw=black,line join=miter,line cap=round,miter limit=3.25,line width=1.240pt] (331.6602,184.2109) -- (123.6523,255.5898);
  \path[cm={{1.0,0.0,0.0,1.0,(-66.0,-5.0)}},draw=black,line join=miter,line cap=round,miter limit=3.25,line width=1.240pt] (123.6523,255.5898) -- (367.3477,255.5898);
  \path[cm={{1.0,0.0,0.0,1.0,(-66.0,-5.0)}},draw=black,line join=miter,line cap=round,miter limit=3.25,line width=1.240pt] (367.3477,255.5898) -- (73.1797,356.5312);
\end{scope}
\end{tikzpicture}
}
\caption{The Perles configuration on the left requires $\sqrt{5}$
to be realized over characteristic zero~\cite[Section 5.5.3]{Gruenbaum}.
The affine configuration on the right requires
$\sqrt{2}$~\cite[Example 14.4.2]{BabyHartshorne}.}
\label{fig:Configurations}
\end{figure}
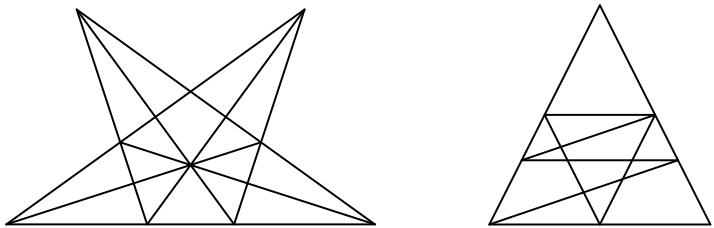

The theory of conditional independence (CI) structures follows the spirit
of matroid theory, in studying common properties of conditional independence
relations among random variables in probability theory, abstracting away the
probability distributions, like matroids abstract from vector spaces.
One branch of CI~structure theory is concerned with discrete random variables.
In~this context, the works of Fero Matúš are in particular to highlight, in
which a connection to matroid theory is drawn explicitly. In~\cite{MatusMatroids},
(simple) matroids are interpreted as a subset of special CI~structures
called semimatroids~\cite{MatusMatroids}, which are in turn semigraphoids.
This hierarchy of abstractions is the combinatorial shadow of the hierarchy of
representations: rank functions of hyperplane arrangements are rank functions
of subspace arrangements, which are in turn entropy functions of discrete
random variables, as far as the independence relations are concerned;
cf.~\cite[Section~6]{MatusMinors}.
Through this connection, essentially probabilistic questions can be raised
about matroid representations~\cite{MatusPartitions} and the related
rank inequalities are of interest in information theory and adjacent fields;
see the survey~\cite{InfoEqs}.
In the opposite direction, combinatorial or geometric ideas from matroid
theory have inspired developments in the theory of discrete CI~structures~\cite{MatusClassification,Studeny4CI}.

This article transfers the technique of von~Staudt constructions to encode
polynomial systems to the realm of \emph{Gaussian} instead of discrete
CI~structures.
Gaussian~CI studies the sets of vanishing almost-principal minors of
(positive-definite) symmetric matrices. The~similarity to matroid theory,
which studies sets of vanishing maximal minors of general matrices, is
apparent, but it is a different similarity from that which embeds linear
matroids into discrete CI~structures.
%
In \Cref{sec:Incidence} we show how to use almost-principal minor constraints,
similar to bases and non-bases in matroid theory, to encode plane projective
incidence relations which ultimately drive the von~Staudt encoding of a
polynomial system. Based on this technique, we obtain an analogue of the
universality result of MacLane in \Cref{sec:MacLane} and answer a question
posed by Petr Šimeček in~\cite{SimecekGaussian} negatively: there exist
non-empty Gaussian CI~models which contain no rational point. In fact, the
minimum algebraic degree of any point in such a model may be arbitrarily
high. Finally, in \Cref{sec:Implication}, we show that the implication problem
for Gaussian CI~statements is polynomial-time equivalent to the existential
theory of the reals. Possible extensions and future directions are discussed
in \Cref{sec:Remarks}.

\subsection*{Notation}

An introduction to the (real) projective plane including a thorough treatment
of the von~Staudt constructions, which are at the heart of this paper, can
be found in~\cite{Perspectives}. The~symbol $\IP{\cdot, \cdot}$ denotes the
standard scalar product of vectors and $\EIP{\cdot, \cdot}$ another symmetric
bilinear form whose definition will depend on context. We work with homogeneous
coordinates of points $p$ and lines $\ell$ in the projective plane $\BBP K^2$
over a field~$\BB K$. The set of points on the line~$\ell$ is
$\ell^\perp \defas \set{p \in \BBP K^2: \IP{p,\ell} = 0}$.

We adopt the convention that sans-serif symbols
$\SF i, \SF j, \SF k, \SF l, \SF p$ denote elements and $\SF E, \SF K,
\SF L, \SF P$ subsets of an implicit ground set $\SF N$ which indexes the
rows and columns of a symmetric matrix. Elements and singleton subsets
are not distinguished and juxtaposition abbreviates set union. For example
$\SF{iK}$ is short for $\set{\SF i} \cup \SF K \subseteq \SF N$ and $\SF{ij}$
denotes the two-element set $\set{\SF i, \SF j}$ and equals~$\SF{ji}$.
If $\Sigma$ is a symmetric matrix indexed by~$\SF N$, then $\Sigma_\SF{ij}$
addresses an entry of $\Sigma$ with a well-defined value.

\section{Preliminaries on Gaussian conditional independence}

Let $\xi$ denote a vector of random variables indexed by $\SF N$ with a joint distribution.
A conditional independence statement $\xi_\SF{i} \CI \xi_\SF{j} \mid \xi_\SF{K}$
asserts that the entries $\xi_\SF{i}$ and $\xi_\SF{j}$ are stochastically
independent under the distribution conditioned on the subvector~$\xi_\SF{K}$.
Informally, this means that any dependency between $\xi_\SF{i}$ and $\xi_\SF{j}$
is explained by the entries of $\xi_\SF{K}$. In the theory of conditional
independence one studies the sets of all symbols $\I{ij|K}$, denoting
CI~statements, which can simultaneously occur for a given class of distributions
or other objects for which ``$\I{ij|K}$'' can  be given an interpretation.
For~a proper introduction to the theory of (probabilistic) conditional
independence structures, the reader is referred to~\cite{Studeny}.

An $\SF N$-variate regular Gaussian distribution is given by its mean
vector~$\mu$ and its positive-definite covariance matrix~$\Sigma$.
Checking whether a CI~statement $\I{ij|K}$ holds is equivalent to evaluating
a special kind of subdeterminant of the covariance matrix, called an
\emph{almost-principal minor}:
\[
  \label{eq:CI} \tag{$\CI$}
  \text{$\I{ij|K}$ holds} \;\Leftrightarrow\;
  \apr{\SF{ij|K} : \Sigma} \defas \det \Sigma_{\SF{iK},\SF{jK}} = 0.
\]
This definition readily generalizes to symmetric matrices over arbitrary
fields, where positive-definiteness is replaced by \emph{principal regularity},
the condition that the principal minors $\pr{\SF K : \Sigma} \defas \det \Sigma_\SF{K}$
do not vanish. Principal regularity and its semidefinite extension,
positive-definiteness, may be seen as technical conditions which make
the interpretation of the CI~symbols $\I{ij|K}$ more well-behaved,
providing the theory with a notion of minors, duality, symmetries
and combinatorial constructions; cf.~\cite[Section~3]{Gaussant}.
The generalization of the structure of positive-definite matrices to~$\BB C$
was first undertaken by Matúš~\cite{MatusGaussian} when he derived the
axioms of \emph{gaussoids}~\cite{LnenickaMatus}.
While there exists a (combinatorially more involved) definition of
conditional independence for positive-semidefinite matrices, we work
with principally regular matrices in this paper and defer the treatment
of semidefinite matrices over $\BB R$ to \Cref{sec:Semidefinite}.

\begin{definition} \label{CIModel}
A set of \emph{CI~constraints} is a collection $\C I$ of conditional
independence $\I{ij|K}$ or dependence statements $\neg\I{ij|K}$ over a
ground set~$\SF N$. For any field $\BB K$, the \emph{model}~$\C V_{\BB K}(\C I)$
of~$\C I$ consists of all principally regular, symmetric $\SF N \times \SF N$
matrices~$\Sigma$ which \emph{satisfy}~$\C I$ as per~\eqref{eq:CI}.
For~ordered~fields~$\BB K$ the \emph{positive model}~$\C K_{\BB K}(\C I)$
is defined analogously as a subset of the positive-definite matrices.
\end{definition}
Principal regularity imposes $\pr{\SF K : \Sigma} \not= 0$ wheras
positive-definiteness imposes $\pr{\SF K : \Sigma} > 0$ on the model,
in addition to the CI~equations and inequations. Thus our models are
first-order definable sets in the theory of~$\BB K$. That is, if
$\BB K$ is any field $\C V_{\BB K}$ is a constructible subset of the
space of symmetric matrices, and if $\BB K$ is ordered, $\C K_{\BB K}$
is a semialgebraic subset, defined by polynomials with integer coefficients.
The language of CI~constraints allows to impose vanishing and non-vanishing
conditions on almost-principal minors of a matrix. The $\I{ij|K}$
almost-principal submatrix of a matrix $\Sigma$ contains the invertible
$\SF K \times \SF K$ principal submatrix. A~Schur complement expansion
of the determinant with respect to this block yields
\[
  \label{eq:CIIP} \tag{$\measuredangle$}
  \apr{\SF{ij|K} : \Sigma} = \pr{\SF K : \Sigma} \left(
    \Sigma_\SF{ij} -
    \Sigma_\SF{i,K} \, \Sigma_\SF{K}^{-1} \, \Sigma_\SF{K,j}
  \right) \overset{!}{=} 0.
\]
Thus prescribing a conditional independence $\I{ij|K}$ effectively stores
the scalar product of the row vector $\Sigma_\SF{i,K}$ and the column vector
$\Sigma_\SF{K,j}$ with respect to the Gram matrix $\Sigma_\SF{K}^{-1}$ in
the entry~$\Sigma_\SF{ij}$ of every matrix $\Sigma$ in the model of~$\I{ij|K}$.

\pagebreak 

\section{Polynomial systems as CI~constraints}
\label{sec:Incidence}

\subsection{Polynomial systems as ruler constructions}
\label{sec:PolynomialRuler}

The universality theorems for matroids rest on an encoding of arbitrary
polynomial systems in the bases and non-bases of a matroid. The relation
of the solution set of the polynomial system to the realization space of
the matroid depends on the technical finesse of this encoding and its
proof. For the results of this paper, we restrict ourselves to
\emph{solvability} questions: the task is to convert a system of polynomial
equations to a set of CI~constraints such that the constraints have a model
if and only if the system has a solution.
Polynomial systems, in turn, are modeled according to von~Staudt by certain
incidence relations in the projective plane. His~classical constructions
rely only on a basis of the projective plane and the ruler as a construction
tool. So~the encoding of certain plane projective ruler constructions is
our gateway to universality.

\begin{definition}
The~\emph{standard projective basis} consists of the infinite point on the
$x$-axis $\bm \infty_x = \PC{1:0:0}$, the infinite point on the $y$-axis
$\bm \infty_y = \PC{0:1:0}$, the origin $\bm 0 = \PC{0:0:1}$ and the
point of units $\bm 1 = \PC{1:1:1}$.
\end{definition}
From these points, the $x$- and $y$-axes $\ell_x$~and~$\ell_y$, unit points
on the axes $\bm1_x$~and~$\bm1_y$ and the line at infinity $\ell_\infty$ can be
constructed, which complete the framework in which ruler constructions are
carried~out. The~standard basis has favorable properties for the constructions
in the next section, notably its shape can be prescribed easily using CI~constraints,
which is why we insist~on~it.

\begin{definition}
A \emph{ruler construction} over a field~$\BB K$ is a finite list of
instructions which constructs a set of points and lines in $\BBP K^2$ from
a given set of points including the standard projective basis using the
computational primitives of
\begin{inparaenum}[label=(\alph*)]
\item \emph{joining} two already constructed, distinct points to form the line through them, and
\item \emph{meeting} two already constructed, distinct lines to form their intersection point.
\end{inparaenum}
The~construction algorithm may receive parameters in the form of
\emph{indeterminate points} which are placed on the $x$-axis $\ell_x$.
\end{definition}
Ruler constructions are required to be deterministic: by stipulating the
distinctness of joined points and met lines in~$\BBP K^2$, the resulting
line or point is uniquely defined as the one-dimensional space of solutions
to two independent linear equations in~$\BB K^3$. For instance, the line
$\ell$ through two distinct points $p, p'$ is given by $\IP{p,\ell} = 0$
and $\IP{p',\ell} = 0$. Usage of the indeterminate points and all objects
constructed from them is permitted as long as all joins and meets are
provably between distinct objects in every instantiation of the
indeterminates.
In this case, the join $\ell$ of the distinct points $p, p'$ can be
immediately computed by the cross product
\[
  \PC{p^x:p^y:p^z} \cross \PC{p'^x:p'^y:p'^z} \defas \PC{
    \det\begin{pmatrix} p^y & p'^y \\ p^z & p'^z \end{pmatrix} :
    -\det\begin{pmatrix} p^x & p'^x \\ p^z & p'^z \end{pmatrix} :
    \det\begin{pmatrix} p^x & p'^x \\ p^y & p'^y \end{pmatrix}
  }.
\]
The same operation computes, dually, the coordinates of the meet of
two distinct lines.


\begin{named}{Problem (F)} \label{ProblemF}
Given a system $F = \set{f_1, \dots, f_r}$ of integer polynomials
${f_i \in \BB Z[t_1, \dots, t_k]}$, construct with a ruler, starting
from the standard projective basis and an indeterminate point
$\bm t_i = \PC{t_i:0:1}$ for each unknown $t_i$, the points
$\bm f_i = \PC{f_i(t_1, \dots, t_k):0:1}$.
\end{named}

By introducing more equalities and variables, the polynomial system can
be assumed to contain only \emph{atomic} equations using the elementary
arithmetic operations of addition and multiplication: $t_i = t_j + t_k$
and $t_i = t_j \cdot t_k$, as well as a distinguished variable $t_1 = 1$
for the unit~\cite[Section~3.2]{vonStaudtSkew}. The von~Staudt constructions
implement precisely addition and multiplication of points on the $x$-axis.
Before we describe these algorithms, we construct a larger projective
framework out of the standard basis, containing points and lines which
are used~in~both:
\begin{multicols}{2}
\small
\textbf{Framework:}
\begin{enumerate}[series=Framework]
\item $\ell_x \defas \bm0 \cross \bm\infty_x = \PC{0:1:0}$
\item $\ell_y \defas \bm0 \cross \bm\infty_y = \PC{1:0:0}$
\item $\ell_\infty \defas \bm\infty_x \cross \bm\infty_y = \PC{0:0:1}$
\end{enumerate}

\begin{enumerate}[resume=Framework]
\item $\ell_{1x} \defas \bm1 \cross \bm\infty_x = \PC{0:-1:1}$
\item $\ell_{1y} \defas \bm1 \cross \bm\infty_y = \PC{-1:0:1}$
\item $\bm1_x \defas \ell_{1y} \cross \ell_x = \PC{1:0:1}$
\item $\bm1_y \defas \ell_{1x} \cross \ell_y = \PC{0:1:1}$
\end{enumerate}
\end{multicols}

\pagebreak 

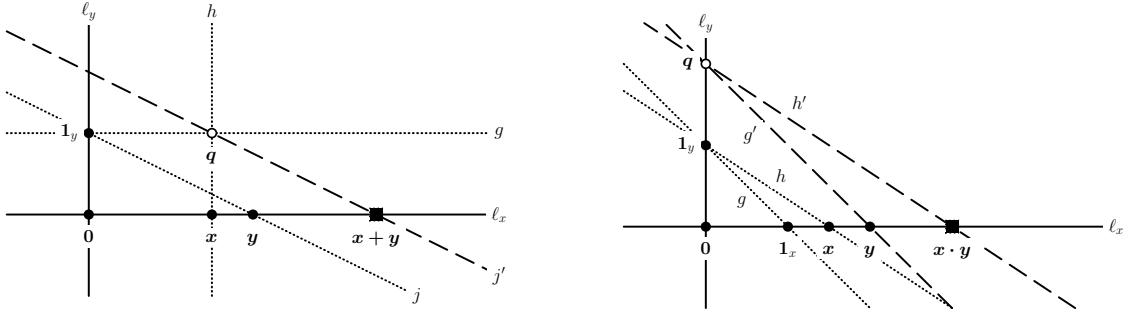
\begin{figure}
\scalebox{0.4}{%
\begin{tikzpicture}[y=0.80pt, x=0.80pt, yscale=-1, xscale=1, inner sep=0pt, outer sep=0pt]

\begin{scope}
  \path[cm={{1.0,0.0,0.0,1.0,(-66.0,-5.0)}},draw=black,line join=miter,line cap=round,miter limit=3.25,line width=1.843pt] (66.0000,245.0000) -- (642.0000,245.0000) node[inner sep=5pt,fill=white] {\LARGE$\ell_x$};
  \path[cm={{1.0,0.0,0.0,1.0,(-66.0,-5.0)}},draw=black,line join=miter,line cap=round,miter limit=3.25,line width=1.843pt] (162.0000,341.0000) -- (162.0000,5.0000)  node[inner sep=5pt,fill=white] {\LARGE$\ell_y$};
  \path[cm={{1.0,0.0,0.0,1.0,(-66.0,-5.0)}},draw=black,dash pattern=on 18.43pt off 9.22pt,line join=miter,line cap=round,miter limit=3.25,line width=1.843pt] (66.0000,29.0000) -- (642.0000,317.0000) node [inner sep=5pt,fill=white] {\LARGE$j'$};
  \path[cm={{1.0,0.0,0.0,1.0,(-66.0,-5.0)}},draw=black,dash pattern=on 0.80pt off 3.20pt,line join=miter,line cap=round,miter limit=3.25,line width=1.843pt] (306.0000,341.0000) -- (306.0000,5.0000) node [inner sep=5pt,fill=white] {\LARGE$h$};
  \path[cm={{1.0,0.0,0.0,1.0,(-66.0,-5.0)}},draw=black,dash pattern=on 0.80pt off 3.20pt,line join=miter,line cap=round,miter limit=3.25,line width=1.843pt] (66.0000,149.0000) -- (642.0000,149.0000) node [inner sep=5pt,fill=white] {\LARGE$g$};
  \path[cm={{1.0,0.0,0.0,1.0,(-66.0,-5.0)}},draw=black,dash pattern=on 0.80pt off 3.20pt,line join=miter,line cap=round,miter limit=3.25,line width=1.843pt] (66.0000,101.0000) -- (546.0000,341.0000) node [inner sep=5pt,fill=white] {\LARGE$j$};
  \path[fill=black,even odd rule] (101.9805,240.0000) .. controls (101.9805,238.4141) and (101.3516,236.8906) .. (100.2305,235.7695) .. controls (99.1094,234.6484) and (97.5859,234.0195) .. (96.0000,234.0195) .. controls (94.4141,234.0195) and (92.8906,234.6484) .. (91.7695,235.7695) .. controls (90.6484,236.8906) and (90.0195,238.4141) .. (90.0195,240.0000) .. controls (90.0195,241.5859) and (90.6484,243.1094) .. (91.7695,244.2305) .. controls (92.8906,245.3516) and (94.4141,245.9805) .. (96.0000,245.9805) .. controls (97.5859,245.9805) and (99.1094,245.3516) .. (100.2305,244.2305) .. controls (101.3516,243.1094) and (101.9805,241.5859) .. (101.9805,240.0000) -- cycle(101.9805,240.0000) node[anchor=north,xshift=-5pt,yshift=-10pt,inner sep=5pt,fill=white] {\LARGE$\bm 0$};
  \path[fill=black,even odd rule] (245.9805,240.0000) .. controls (245.9805,238.4141) and (245.3516,236.8906) .. (244.2305,235.7695) .. controls (243.1094,234.6484) and (241.5859,234.0195) .. (240.0000,234.0195) .. controls (238.4141,234.0195) and (236.8906,234.6484) .. (235.7695,235.7695) .. controls (234.6484,236.8906) and (234.0195,238.4141) .. (234.0195,240.0000) .. controls (234.0195,241.5859) and (234.6484,243.1094) .. (235.7695,244.2305) .. controls (236.8906,245.3516) and (238.4141,245.9805) .. (240.0000,245.9805) .. controls (241.5859,245.9805) and (243.1094,245.3516) .. (244.2305,244.2305) .. controls (245.3516,243.1094) and (245.9805,241.5859) .. (245.9805,240.0000) -- cycle(245.9805,240.0000) node[anchor=north,xshift=-5pt,yshift=-13pt,inner sep=5pt,fill=white] {\LARGE$\bm x$};
  \path[fill=black,even odd rule] (293.9805,240.0000) .. controls (293.9805,238.4141) and (293.3516,236.8906) .. (292.2305,235.7695) .. controls (291.1094,234.6484) and (289.5859,234.0195) .. (288.0000,234.0195) .. controls (286.4141,234.0195) and (284.8906,234.6484) .. (283.7695,235.7695) .. controls (282.6484,236.8906) and (282.0195,238.4141) .. (282.0195,240.0000) .. controls (282.0195,241.5859) and (282.6484,243.1094) .. (283.7695,244.2305) .. controls (284.8906,245.3516) and (286.4141,245.9805) .. (288.0000,245.9805) .. controls (289.5859,245.9805) and (291.1094,245.3516) .. (292.2305,244.2305) .. controls (293.3516,243.1094) and (293.9805,241.5859) .. (293.9805,240.0000) -- cycle(293.9805,240.0000) node[anchor=north,xshift=-5pt,yshift=-13pt,inner sep=5pt,fill=white] {\LARGE$\bm y$};
  \path[fill=black,even odd rule] (101.9805,144.0000) .. controls (101.9805,142.4141) and (101.3516,140.8906) .. (100.2305,139.7695) .. controls (99.1094,138.6484) and (97.5859,138.0195) .. (96.0000,138.0195) .. controls (94.4141,138.0195) and (92.8906,138.6484) .. (91.7695,139.7695) .. controls (90.6484,140.8906) and (90.0195,142.4141) .. (90.0195,144.0000) .. controls (90.0195,145.5859) and (90.6484,147.1094) .. (91.7695,148.2305) .. controls (92.8906,149.3516) and (94.4141,149.9805) .. (96.0000,149.9805) .. controls (97.5859,149.9805) and (99.1094,149.3516) .. (100.2305,148.2305) .. controls (101.3516,147.1094) and (101.9805,145.5859) .. (101.9805,144.0000) -- cycle(101.9805,144.0000) node[anchor=east,xshift=-10pt,inner sep=5pt,fill=white] {\LARGE$\bm1_y$};
  \path[fill=black,even odd rule] (245.2188,144.0000) .. controls (245.2188,142.6172) and (244.6719,141.2891) .. (243.6914,140.3086) .. controls (242.7109,139.3281) and (241.3828,138.7812) .. (240.0000,138.7812) .. controls (238.6172,138.7812) and (237.2891,139.3281) .. (236.3086,140.3086) .. controls (235.3281,141.2891) and (234.7812,142.6172) .. (234.7812,144.0000) .. controls (234.7812,145.3828) and (235.3281,146.7109) .. (236.3086,147.6914) .. controls (237.2891,148.6719) and (238.6172,149.2188) .. (240.0000,149.2188) .. controls (241.3828,149.2188) and (242.7109,148.6719) .. (243.6914,147.6914) .. controls (244.6719,146.7109) and (245.2188,145.3828) .. (245.2188,144.0000) -- cycle(245.2188,144.0000) node[anchor=north,xshift=-5pt,yshift=-13pt,inner sep=5pt,fill=white] {\LARGE$\bm q$};
  \path[fill=white,even odd rule] (245.2188,144.0000) .. controls (245.2188,142.6172) and (244.6719,141.2891) .. (243.6914,140.3086) .. controls (242.7109,139.3281) and (241.3828,138.7812) .. (240.0000,138.7812) .. controls (238.6172,138.7812) and (237.2891,139.3281) .. (236.3086,140.3086) .. controls (235.3281,141.2891) and (234.7812,142.6172) .. (234.7812,144.0000) .. controls (234.7812,145.3828) and (235.3281,146.7109) .. (236.3086,147.6914) .. controls (237.2891,148.6719) and (238.6172,149.2188) .. (240.0000,149.2188) .. controls (241.3828,149.2188) and (242.7109,148.6719) .. (243.6914,147.6914) .. controls (244.6719,146.7109) and (245.2188,145.3828) .. (245.2188,144.0000) -- cycle(245.2188,144.0000);
    \path[cm={{1.0,0.0,0.0,1.0,(-66.0,-5.0)}},draw=black,line join=miter,line cap=rect,miter limit=3.25,line width=1.600pt] (311.2188,149.0000) .. controls (311.2188,147.6172) and (310.6719,146.2891) .. (309.6914,145.3086) .. controls (308.7109,144.3281) and (307.3828,143.7812) .. (306.0000,143.7812) .. controls (304.6172,143.7812) and (303.2891,144.3281) .. (302.3086,145.3086) .. controls (301.3281,146.2891) and (300.7812,147.6172) .. (300.7812,149.0000) .. controls (300.7812,150.3828) and (301.3281,151.7109) .. (302.3086,152.6914) .. controls (303.2891,153.6719) and (304.6172,154.2188) .. (306.0000,154.2188) .. controls (307.3828,154.2188) and (308.7109,153.6719) .. (309.6914,152.6914) .. controls (310.6719,151.7109) and (311.2188,150.3828) .. (311.2188,149.0000) -- cycle(311.2188,149.0000);
  \path[cm={{1.0,0.0,0.0,1.0,(-66.0,-5.0)}},draw=black,fill=black,dash pattern=on 0.80pt off 3.20pt,line join=miter,line cap=round,miter limit=3.24,nonzero rule,line width=0.024pt] (490.4023,252.5977) -- (490.4023,237.4023) -- (505.5977,237.4023) -- (505.5977,252.5977) -- cycle(490.4023,252.5977) node[anchor=north,xshift=6pt,yshift=-5pt,inner sep=5pt,fill=white] {\LARGE$\bm{x+y}$};
\end{scope}
\end{tikzpicture}
}
\hspace{3em}
\scalebox{0.4}{%
\begin{tikzpicture}[y=0.80pt, x=0.80pt, yscale=-1, xscale=1, inner sep=0pt, outer sep=0pt]

\begin{scope}
  \path[cm={{1.0,0.0,0.0,1.0,(-66.0,-5.0)}},draw=black,line join=miter,line cap=round,miter limit=3.25,line width=1.843pt] (66.0000,245.0000) -- (642.0000,245.0000) node[inner sep=5pt,fill=white] {\LARGE$\ell_x$};
  \path[cm={{1.0,0.0,0.0,1.0,(-66.0,-5.0)}},draw=black,line join=miter,line cap=round,miter limit=3.25,line width=1.843pt] (162.0000,341.0000) -- (162.0000,5.0000)  node[inner sep=5pt,fill=white] {\LARGE$\ell_y$};
  \path[cm={{1.0,0.0,0.0,1.0,(-66.0,-5.0)}},draw=black,dash pattern=on 18.43pt off 9.22pt,line join=miter,line cap=round,miter limit=3.25,line width=1.843pt] (450.0000,341.0000) -- (114.0000,5.0000) node [inner sep=5pt,fill=white,xshift=80pt,yshift=-105pt] {\LARGE$g'$};
  \path[cm={{1.0,0.0,0.0,1.0,(-66.0,-5.0)}},draw=black,dash pattern=on 18.43pt off 9.22pt,line join=miter,line cap=round,miter limit=3.25,line width=1.843pt] (594.0000,341.0000) -- (90.0000,5.0000) node [inner sep=5pt,fill=white,xshift=145pt,yshift=-75pt] {\LARGE$h'$};
  \path[cm={{1.0,0.0,0.0,1.0,(-66.0,-5.0)}},draw=black,dash pattern=on 0.80pt off 3.20pt,line join=miter,line cap=round,miter limit=3.25,line width=1.843pt] (66.0000,53.0000) -- (354.0000,341.0000) node [inner sep=5pt,fill=white,xshift=-120pt,yshift=100pt] {\LARGE$g$};
  \path[cm={{1.0,0.0,0.0,1.0,(-66.0,-5.0)}},draw=black,dash pattern=on 0.80pt off 3.20pt,line join=miter,line cap=round,miter limit=3.25,line width=1.843pt] (66.0000,85.0000) -- (450.0000,341.0000) node [inner sep=5pt,fill=white,xshift=-160pt,yshift=125pt] {\LARGE$h$};

  \path[fill=black,even odd rule] (101.9805,240.0000) .. controls (101.9805,238.4141) and (101.3516,236.8906) .. (100.2305,235.7695) .. controls (99.1094,234.6484) and (97.5859,234.0195) .. (96.0000,234.0195) .. controls (94.4141,234.0195) and (92.8906,234.6484) .. (91.7695,235.7695) .. controls (90.6484,236.8906) and (90.0195,238.4141) .. (90.0195,240.0000) .. controls (90.0195,241.5859) and (90.6484,243.1094) .. (91.7695,244.2305) .. controls (92.8906,245.3516) and (94.4141,245.9805) .. (96.0000,245.9805) .. controls (97.5859,245.9805) and (99.1094,245.3516) .. (100.2305,244.2305) .. controls (101.3516,243.1094) and (101.9805,241.5859) .. (101.9805,240.0000) -- cycle(101.9805,240.0000) node[anchor=north,xshift=-5pt,yshift=-10pt,inner sep=5pt,fill=white] {\LARGE$\bm 0$};

  \path[fill=black,even odd rule] (197.9805,240.0000) .. controls (197.9805,238.4141) and (197.3516,236.8906) .. (196.2305,235.7695) .. controls (195.1094,234.6484) and (193.5859,234.0195) .. (192.0000,234.0195) .. controls (190.4141,234.0195) and (188.8906,234.6484) .. (187.7695,235.7695) .. controls (186.6484,236.8906) and (186.0195,238.4141) .. (186.0195,240.0000) .. controls (186.0195,241.5859) and (186.6484,243.1094) .. (187.7695,244.2305) .. controls (188.8906,245.3516) and (190.4141,245.9805) .. (192.0000,245.9805) .. controls (193.5859,245.9805) and (195.1094,245.3516) .. (196.2305,244.2305) .. controls (197.3516,243.1094) and (197.9805,241.5859) .. (197.9805,240.0000) -- cycle(197.9805,240.0000) node[anchor=north,xshift=-5pt,yshift=-10pt,inner sep=5pt,fill=white] {\LARGE$\bm 1_x$};
  \path[fill=black,even odd rule] (245.9805,240.0000) .. controls (245.9805,238.4141) and (245.3516,236.8906) .. (244.2305,235.7695) .. controls (243.1094,234.6484) and (241.5859,234.0195) .. (240.0000,234.0195) .. controls (238.4141,234.0195) and (236.8906,234.6484) .. (235.7695,235.7695) .. controls (234.6484,236.8906) and (234.0195,238.4141) .. (234.0195,240.0000) .. controls (234.0195,241.5859) and (234.6484,243.1094) .. (235.7695,244.2305) .. controls (236.8906,245.3516) and (238.4141,245.9805) .. (240.0000,245.9805) .. controls (241.5859,245.9805) and (243.1094,245.3516) .. (244.2305,244.2305) .. controls (245.3516,243.1094) and (245.9805,241.5859) .. (245.9805,240.0000) -- cycle(245.9805,240.0000) node[anchor=north,xshift=-5pt,yshift=-13pt,inner sep=5pt,fill=white] {\LARGE$\bm x$};
  \path[fill=black,even odd rule] (293.9805,240.0000) .. controls (293.9805,238.4141) and (293.3516,236.8906) .. (292.2305,235.7695) .. controls (291.1094,234.6484) and (289.5859,234.0195) .. (288.0000,234.0195) .. controls (286.4141,234.0195) and (284.8906,234.6484) .. (283.7695,235.7695) .. controls (282.6484,236.8906) and (282.0195,238.4141) .. (282.0195,240.0000) .. controls (282.0195,241.5859) and (282.6484,243.1094) .. (283.7695,244.2305) .. controls (284.8906,245.3516) and (286.4141,245.9805) .. (288.0000,245.9805) .. controls (289.5859,245.9805) and (291.1094,245.3516) .. (292.2305,244.2305) .. controls (293.3516,243.1094) and (293.9805,241.5859) .. (293.9805,240.0000) -- cycle(293.9805,240.0000) node[anchor=north,xshift=-5pt,yshift=-13pt,inner sep=5pt,fill=white] {\LARGE$\bm y$};
  \path[fill=black,even odd rule] (101.9805,144.0000) .. controls (101.9805,142.4141) and (101.3516,140.8906) .. (100.2305,139.7695) .. controls (99.1094,138.6484) and (97.5859,138.0195) .. (96.0000,138.0195) .. controls (94.4141,138.0195) and (92.8906,138.6484) .. (91.7695,139.7695) .. controls (90.6484,140.8906) and (90.0195,142.4141) .. (90.0195,144.0000) .. controls (90.0195,145.5859) and (90.6484,147.1094) .. (91.7695,148.2305) .. controls (92.8906,149.3516) and (94.4141,149.9805) .. (96.0000,149.9805) .. controls (97.5859,149.9805) and (99.1094,149.3516) .. (100.2305,148.2305) .. controls (101.3516,147.1094) and (101.9805,145.5859) .. (101.9805,144.0000) -- cycle(101.9805,144.0000) node[anchor=east,xshift=-10pt,inner sep=5pt,fill=white] {\LARGE$\bm1_y$};
  \path[fill=black,even odd rule] (101.2188,48.0000) .. controls (101.2188,46.6172) and (100.6719,45.2891) .. (99.6914,44.3086) .. controls (98.7109,43.3281) and (97.3828,42.7812) .. (96.0000,42.7812) .. controls (94.6172,42.7812) and (93.2891,43.3281) .. (92.3086,44.3086) .. controls (91.3281,45.2891) and (90.7812,46.6172) .. (90.7812,48.0000) .. controls (90.7812,49.3828) and (91.3281,50.7109) .. (92.3086,51.6914) .. controls (93.2891,52.6719) and (94.6172,53.2188) .. (96.0000,53.2188) .. controls (97.3828,53.2188) and (98.7109,52.6719) .. (99.6914,51.6914) .. controls (100.6719,50.7109) and (101.2188,49.3828) .. (101.2188,48.0000) -- cycle(101.2188,48.0000) node [anchor=east,xshift=-10pt,inner sep=6pt,fill=white] {\LARGE$\bm q$};
  \path[fill=white,even odd rule] (101.2188,48.0000) .. controls (101.2188,46.6172) and (100.6719,45.2891) .. (99.6914,44.3086) .. controls (98.7109,43.3281) and (97.3828,42.7812) .. (96.0000,42.7812) .. controls (94.6172,42.7812) and (93.2891,43.3281) .. (92.3086,44.3086) .. controls (91.3281,45.2891) and (90.7812,46.6172) .. (90.7812,48.0000) .. controls (90.7812,49.3828) and (91.3281,50.7109) .. (92.3086,51.6914) .. controls (93.2891,52.6719) and (94.6172,53.2188) .. (96.0000,53.2188) .. controls (97.3828,53.2188) and (98.7109,52.6719) .. (99.6914,51.6914) .. controls (100.6719,50.7109) and (101.2188,49.3828) .. (101.2188,48.0000) -- cycle(101.2188,48.0000);
    \path[cm={{1.0,0.0,0.0,1.0,(-66.0,-5.0)}},draw=black,line join=miter,line cap=rect,miter limit=3.25,line width=1.600pt] (167.2188,53.0000) .. controls (167.2188,51.6172) and (166.6719,50.2891) .. (165.6914,49.3086) .. controls (164.7109,48.3281) and (163.3828,47.7812) .. (162.0000,47.7812) .. controls (160.6172,47.7812) and (159.2891,48.3281) .. (158.3086,49.3086) .. controls (157.3281,50.2891) and (156.7812,51.6172) .. (156.7812,53.0000) .. controls (156.7812,54.3828) and (157.3281,55.7109) .. (158.3086,56.6914) .. controls (159.2891,57.6719) and (160.6172,58.2188) .. (162.0000,58.2188) .. controls (163.3828,58.2188) and (164.7109,57.6719) .. (165.6914,56.6914) .. controls (166.6719,55.7109) and (167.2188,54.3828) .. (167.2188,53.0000) -- cycle(167.2188,53.0000);
  \path[cm={{1.0,0.0,0.0,1.0,(-66.0,-5.0)}},draw=black,fill=black,dash pattern=on 0.80pt off 3.20pt,line join=miter,line cap=round,miter limit=3.24,nonzero rule,line width=0.024pt] (442.4023,252.5977) -- (442.4023,237.4023) -- (457.5977,237.4023) -- (457.5977,252.5977) -- cycle(442.4023,252.5977) node[anchor=north,xshift=6pt,yshift=-5.5pt,inner sep=7pt,fill=white] {\LARGE$\bm{x \cdot y}$};
\end{scope}
\end{tikzpicture}
}
\caption{Von~Staudt constructions in two affine pictures. The solid points
are given, the hollow ones are helper points in the construction of the
square target points. The axes are displayed as solid lines, helper lines
are dotted and the dashed lines, which are parallel to the dotted ones,
yield the target points.
}
\label{fig:vonStaudt}
\end{figure}

\Cref{fig:vonStaudt} contains pictures of the von~Staudt constructions
for  addition and multiplication of indeterminate points in the affine
$xy$-plane by projective ruler constructions from the standard~basis.
The pictures join points, meet lines and construct the parallel to a line
through another point. This last affine operation can be performed by the
projective ruler using the line at infinity not pictured here.
Full descriptions of these classical constructions are given in~\cite[Section~5.6]{Perspectives}
and with emphasis on matroids (over skew fields) in~\cite{vonStaudtSkew}.
Here we give the algorithms with indeterminates $\bm x = \PC{x:0:1}$ and
$\bm y = \PC{y:0:1}$ using cross products and using the same notation as
in \Cref{fig:vonStaudt}:

\begin{multicols}{2}
\small
\noindent\textbf{Addition:}
\begin{enumerate}
\item $g \defas \bm1_y \cross \bm\infty_x = \PC{0:-1:1}$
\item $h \defas \bm x \cross \bm\infty_y = \PC{-1:0:x}$
\item $\bm q \defas g \cross h = \PC{x:1:1}$
\item $j \defas \bm y \cross \bm1_y = \PC{-1:-y:y}$
\item $\bm\infty_j \defas j \cross \ell_\infty = \PC{-y:1:0}$
\item $j' \defas \bm q \cross \bm\infty_j = \PC{-1:-y:x+y}$
\item $\bm{x+y} \defas j' \cross \ell_x = \PC{x+y:0:1}$
\end{enumerate}
\hphantom{balancing line}

\noindent\textbf{Multiplication:}
\begin{enumerate}
\item $g \defas \bm1_x \cross \bm1_y = \PC{-1:-1:1}$
\item $h \defas \bm x \cross \bm1_y = \PC{-1:-x:x}$
\item $\bm\infty_g \defas g \cross \ell_\infty = \PC{-1:1:0}$
\item $\bm\infty_h \defas h \cross \ell_\infty = \PC{-x:1:0}$
\item $g' \defas \infty_g \cross \bm y = \PC{1:1:-y}$
\item $\bm q \defas g' \cross \ell_y = \PC{0:y:1}$
\item $h' \defas \bm q \cross \bm\infty_h = \PC{1:x:-x\cdot y}$
\item $\bm{x\cdot y} \defas h' \cross \ell_x = \PC{x\cdot y:0:1}$
\end{enumerate}
\end{multicols}

\begin{lemma} \label{SolveProblemF}
Given the standard basis, the von~Staudt constructions solve \ref{ProblemF}.
\qed
\end{lemma}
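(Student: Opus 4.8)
The plan is to read each $f_i$ as the output of a straight-line program built from the ring generators $1, t_1, \dots, t_k$ of $\BB Z[t_1, \dots, t_k]$ using the operations $+$ and $\cdot$, and then to implement each instruction of that program by one of the two von~Staudt routines above. By the reduction cited from \cite[Section~3.2]{vonStaudtSkew} I may assume the program uses only these atomic operations, with the unit supplied by the framework point $\bm1_x = \PC{1:0:1}$ and each indeterminate by the given point $\bm t_i = \PC{t_i:0:1}$, both lying on $\ell_x$.

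First I would set up the induction on the length of the straight-line program. The observation that makes the induction close is that the two routines accept points of the form $\PC{v:0:1}$ on $\ell_x$ and return, respectively, $\PC{x+y:0:1}$ and $\PC{xy:0:1}$, which are again of this form; hence the output of any instruction is a legitimate input to the next one. The base cases are exactly the addition and multiplication algorithms listed above, so the inductive step is immediate once those are validated. For a polynomial with coefficients of negative sign I would additionally invoke the classical reflection of a point on $\ell_x$ through $\bm0$, which realises the additive inverse (see \cite[Section~5.6]{Perspectives}); alternatively one rearranges each atomic equation so that only $+$ and $\cdot$ appear, as is customary in such encodings.

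The real content is twofold: that each routine terminates in the asserted target point, and that it is a genuine ruler construction in the sense of the definition. The first part is settled by the displayed cross-product coordinates, which already exhibit the terminal points as $\PC{x+y:0:1}$ and $\PC{xy:0:1}$. The second part is the only place where something could go wrong: in every instantiation of $x$ and $y$, each join must connect two provably distinct points and each meet two provably distinct lines. I would check this step by step, and what makes it work is that in each of the seven steps of the addition and the eight steps of the multiplication the two operands disagree in a coordinate that is identically $0$ in one operand and a fixed nonzero constant in the other—for instance a finite point meeting a point at infinity, or a line with nonzero leading entry against $\ell_x = \PC{0:1:0}$—so the operands are never proportional, whatever the values of $x$ and $y$. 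Hence no degeneracy arises for any instantiation, and by the remarks following the definition of ruler constructions the composite of all the instructions is again a valid ruler construction whose terminal point is $\bm f_i = \PC{f_i(t_1, \dots, t_k):0:1}$. Thus the hard part reduces to this distinctness bookkeeping, which the uniform coordinate pattern settles at once.
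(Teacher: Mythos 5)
Your proposal is correct and follows essentially the same route as the paper, which states the lemma with an immediate \qed because its content is exactly what you spell out: the displayed cross-product coordinates certify the outputs $\PC{x+y:0:1}$ and $\PC{x\cdot y:0:1}$, the reduction to atomic equations $t_i = t_j + t_k$, $t_i = t_j\cdot t_k$, $t_1 = 1$ (citing \cite[Section~3.2]{vonStaudtSkew}) handles composition and, via rearrangement, negative coefficients, and the step-by-step distinctness bookkeeping you perform is precisely the content of \Cref{Preparation}. Your observation that in every join and meet one operand has an identically zero coordinate where the other has a fixed nonzero constant is the right uniform reason the construction never degenerates for any instantiation of the indeterminates.
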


This very analytic treatment of the construction is required to observe
the following subtle point which will be important in \Cref{sec:RulerCI}:

\begin{lemma} \label{Preparation}
All meet and join operations in the von~Staudt construction are between
distinct points and lines, independently of the positions of the
indeterminates $\bm t_i$ on the $x$-axis.
Moreover, for every point and line needed in the construction, one
homogeneous coordinate can be given which is non-zero, also independently
of the indeterminates.
\qed
\end{lemma}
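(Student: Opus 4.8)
The plan is to read off both assertions directly from the explicit homogeneous coordinate vectors that accompany every step of the framework and of the addition and multiplication algorithms. These vectors are polynomials in the indeterminates $\bm t_i$, but a glance at the lists shows that each constructed point and line carries at least one coordinate which is a nonzero \emph{constant}, namely $+1$ or $-1$: for instance $g = \PC{0:-1:1}$, $h = \PC{-1:0:x}$ and $\bm q = \PC{x:1:1}$ in the addition, and $\bm q = \PC{0:y:1}$ together with $h' = \PC{1:x:-x\cdot y}$ in the multiplication. The same is true of the framework objects and of the indeterminate points $\bm t_i = \PC{t_i:0:1}$, whose last coordinate is $1$. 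Since the witnessing coordinate is a constant, it is nonzero for every instantiation of the indeterminates, which is precisely the second assertion. So this part is finished by an exhaustive but purely mechanical inspection of the coordinate lists.

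For the first assertion I would exploit that each meet or join returns the cross product $w = u \cross v$ of its two inputs $u, v$, and that two nonzero vectors in $\BB K^3$ are linearly dependent --- that is, equal as projective objects --- exactly when their cross product vanishes. With the second assertion now available for \emph{all} objects, the inputs $u, v$ of any given step (whether basis data, indeterminate points, or outputs of earlier steps) are genuine nonzero projective points and lines, and the output $w$ is again one of the listed objects and hence nonzero. Therefore $u$ and $v$ are linearly independent, so the two joined points, respectively the two met lines, are distinct; and because the witnessing coordinate of $w$ is constant, this distinctness holds uniformly in the $\bm t_i$. In this way the first assertion is a formal consequence of the second.

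The whole lemma thus collapses to the bookkeeping of exhibiting one constant nonzero coordinate per object. The one point that genuinely requires attention --- and the reason the deliberately \emph{analytic} treatment preceding \Cref{SolveProblemF} was arranged --- is that each witness must be a nonzero \emph{constant} and not merely a generically nonzero polynomial such as $x+y$; otherwise a special value of the indeterminates could make two objects coincide and destroy the determinism of the ruler construction. I expect this to be the only substantive obstacle, and it is resolved not by a calculation but by the specific shape of the standard projective basis, on which the constructions were designed precisely so that such a constant coordinate is always present.
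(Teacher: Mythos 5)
Your proposal is correct and takes essentially the same route as the paper, which states the lemma without further argument precisely because the preceding analytic lists already exhibit, for every framework object and every step of the addition and multiplication algorithms, a coordinate equal to the constant $\pm1$. Your additional observation that the first assertion follows formally from the second via the criterion ``$u \cross v = 0$ iff $u,v$ are projectively equal'' is a tidy way to organize the inspection; the only point worth noting is that the listed representatives differ from the raw cross products by constant scalars $\pm1$ in a few steps, which is harmless for the nonvanishing argument.
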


\subsection{Ruler constructions as CI~constraints}
\label{sec:RulerCI}

To model a ruler construction as CI~constraints, we work over a ground set
$\SF N = \SF{PLE}$, which decomposes into sets $\SF P = \set{\SF p_1, \SF p_2, \dots}$
and $\SF L = \set{\SF l_1, \SF l_2, \dots}$ for labeling the points and lines
which are used during the algorithm, respectively, and $\SF E = \set{\SF x, \SF y, \SF z}$
which indexes the homogeneous coordinates of the points and lines.
Instead of implementing the join and meet primitives via collinearity of
points, as matroids do (and where all lines are implicit), or by the cross
product, we use the scalar product interpretation of CI~constraints given
in~\eqref{eq:CIIP}. Namely, given homogeneous coordinates of a point $p$
and of a line $\ell$, the incidence~$p \in \ell^\perp$ is equivalent to the
vanishing of the standard scalar product~$\IP{p,\ell}$. To~construct the
line $\ell_{pq}$ joining two already constructed, distinct points $p, q$
labeled by $\SF p, \SF q \in \SF P$, introduce a new variable $\SF{l_{pq}}$
into the set $\SF L$ and require the incidence of the point~$\SF p$ and
the point~$\SF q$ to the new line~$\SF{l_{pq}}$.

The complete encoding of the von~Staudt constructions of a polynomial system
into a set of CI~constraints is given in \Cref{Constraints}~below. Up to
some implementation details, the basic ideas behind this \namecref{Constraints}~are:
\begin{itemize}
\item
The homogeneous coordinates of the point indexed by $\SF p \in \SF P$
are stored in the entries $\Sigma_\SF{pe}$ of a model~$\Sigma$, for
$\SF e \in \SF E = \set{\SF x, \SF y, \SF z}$. Likewise for lines
$\SF l \in \SF L$.

\item
For every pair of distinct points and/or lines $\SF a, \SF b \in \SF{PL}$,
we impose the CI~statement $\I{ab|xyz}$ in order to store the scalar product
of their homogeneous coordinates in the entry $\Sigma_\SF{ab}$. This scalar
product is with respect to the inverse block matrix $\Sigma_\SF{E}^{-1}$,
according to \eqref{eq:CIIP}.

\item
The desired orthogonalities between $\SF p \in \SF P$ and $\SF l \in \SF L$
which assert incidence relationships can then be prescribed with
CI~constraints~$\I{pl|}$.

\end{itemize}

\begin{definition} \label{Constraints}
Let $F = \set{f_1, \dots, f_r} \subseteq \BB Z[t_1, \dots, t_k]$.
Consider the von~Staudt construction of these polynomials making reference
to points labeled $\SF P = \set{\SF t_1, \dots, \SF t_k, \SF f_1, \dots, \SF f_r,
\SF p_1, \dots, \SF p_n}$ and lines labeled $\SF L = \set{\SF l_1, \dots, \SF l_m}$,
where the $\SF t_i$ represent the indeterminate points and $\SF f_i$
represent the values of the $f_i$ in the construction. Define a set of
CI~constraints~$\tilde{\C I}(F)$ over the ground~set~$\SF{PLE}$ consisting~of:
\begin{enumerate}[label=($\C I$.\roman*),series=Constraints]
\item \label{Constraints:C}
$\I{pe|}$ or $\neg\I{pe|}$ for all points $\SF p$ corresponding to the
standard projective basis and $\SF e \in \SF E$, depending on whether
the $\SF e$-coordinate of the point is zero or not.

\item \label{Constraints:T}
$\I{ty|}$ and $\neg\I{tz|}$ for indeterminate points~$\SF t = \SF t_1, \dots, \SF t_k$.

\item \label{Constraints:PL}
$\neg\I{ae|}$ for each $\SF a \in \SF{PL}$ and one of the coordinates
$\SF e \in \SF E$ on which the point or line labeled $\SF a$ is
non-zero, which can be deduced by \Cref{Preparation}.

\item \label{Constraints:IP}
$\I{ab|xyz}$ for all distinct $\SF a, \SF b \in \SF{PL}$.

\item \label{Constraints:I}
$\I{pl|}$ for any incidence relationship between $\SF p \in \SF P$ and
$\SF l \in \SF L$ which is required to express a join or meet operation
of the construction.

\end{enumerate}
\end{definition}

\Cref{fig:IncidenceMatrix} shows the generic matrix satisfying
constraint type~\ref{Constraints:IP}.
The constraints~$\tilde{\C I}(F)$ emulate the incidence relations behind
the von~Staudt construction. Every matrix which satisfies $\tilde{\C I}(F)$
gives values to the parameters $t_1, \dots, t_k$ and all other points and
lines such that the same incidence relations hold, which forces the $\SF f_i$
to assume the evaluation of $f_i(t_1, \dots, t_k)$ up to various scalings.
The caveat, however, is that each model starts the construction with
coordinates of points which are not necessarily the standard basis and
executes the ruler construction with a possibly non-standard notion of
``incidence'' which comes from~\eqref{eq:CIIP} and constraint
type~\ref{Constraints:IP}: the linear system defining incidence
$p \in \ell^\perp$ switches from $\IP{p,\ell} = 0$ to $\EIP{p,\ell} = 0$,
where $\EIP{\cdot,\cdot}$ is a non-degenerate symmetric bilinear form
defined by the inverse of~$\Sigma_\SF{E}$ in the matrix~$\Sigma$ thought
of as executing the ruler construction.

\begin{figure}
\begin{align*}
\kbordermatrix{
         &     \SF p_1     &    \cdots    &   \SF p_n    &        &     \SF l_1      &      \cdots     &      \SF l_m       &        &   \SF x  &   \SF y  &   \SF z  \\
 \SF p_1 &      p_1^*      &              &  \EIP{p,p'}  & \vrule &                  &                 &                    & \vrule &   p_1^x  &   p_1^y  &   p_1^z  \\
  \vdots &                 &    \ddots    &              & \vrule &                  &   \EIP{p,\ell}  &                    & \vrule &          &  \vdots  &          \\
 \SF p_n &    \EIP{p',p}   &              &    p_n^*     & \vrule &                  &                 &                    & \vrule &   p_n^x  &   p_n^y  &   p_n^z  \\ \cline{2-12}
 \SF l_1 &                 &              &              & \vrule &     \ell_1^*     &                 &  \EIP{\ell,\ell'}  & \vrule & \ell_1^x & \ell_1^y & \ell_1^z \\
  \vdots &                 & \EIP{\ell,p} &              & \vrule &                  &      \ddots     &                    & \vrule &          &  \vdots  &          \\
 \SF l_m &                 &              &              & \vrule & \EIP{\ell',\ell} &                 &      \ell_m^*      & \vrule & \ell_m^x & \ell_m^y & \ell_m^z \\ \cline{2-12}
 \SF x   &      p_1^x      &              &    p_n^x     & \vrule &     \ell_1^x     &                 &      \ell_m^x      & \vrule &          &          &          \\
 \SF y   &      p_1^y      &    \cdots    &    p_n^y     & \vrule &     \ell_1^y     &      \cdots     &      \ell_m^y      & \vrule &          & \Sigma_\SF{E} &     \\
 \SF z   &      p_1^z      &              &    p_n^z     & \vrule &     \ell_1^z     &                 &      \ell_m^z      & \vrule &          &          &
}
\end{align*}
\caption{The generic matrix satisfying the encoding of incidence relations
among points $\SF p_1, \dots, \SF p_n$ and lines $\SF l_1, \dots, \SF l_m$
in the projective plane, according to \Cref{Constraints}.
The scalar product $\llangle\cdot, \cdot\rrangle$ is given by the inverse
of the $\Sigma_\SF{E}$~block.}
\label{fig:IncidenceMatrix}
\end{figure}

\begin{lemma} \label{Model}
In the notation of \Cref{Constraints}, let $\Sigma \in \tilde{\C V}_{\BB K}(F)
\defas \C V_{\BB K}(\tilde{\C I}(F))$.
\begin{enumerate}[label=(\arabic*)]
\item \label{Model:Coords}
$\Sigma$ contains the homogeneous coordinates of points $\bm t_i$, $\bm f_i$,
$p_i$ and lines $\ell_i$ in $\BBP K^2$ in the entries $\SF P \times \SF E$
and $\SF L \times \SF E$. The image of the projective basis coincides with
the standard projective basis, except for $\tilde{\bm1} = \PC{s_x:s_y:1}$,
which may be different from $\bm1$. The $x$-axis, the $y$-axis and the line
at infinity are the same as with the standard projective basis.
The points $\bm t_i$ and $\bm f_i$ lie on the $x$-axis.

\item \label{Model:Incidence}
With the non-degenerate symmetric bilinear form $\EIP{\cdot, \cdot}$
defined by $\Sigma_{\SF E}^{-1}$, an incidence $\SF p_i \!\in\! \SF l_j^\perp$
imposed in the construction implies ${\EIP{p_i, \ell_j} = 0}$,
i.e.~${p_i \!\in\! \Sigma_\SF{E}(\ell_j^\perp)}$.

\item \label{Model:Determined}
The $\bm f_i$, $p_i$ and $\ell_i$ are uniquely determined as points in
$\BBP K^2$ by the points $\bm t_i$, the scalings $s_x$, $s_y$ and the
$\Sigma_\SF{E}$ block. All other off-diagonal entries of~$\Sigma$ are
functions of these homogeneous coordinates.
\end{enumerate}
\end{lemma}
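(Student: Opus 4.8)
The plan is to read the matrix off the Schur complement identity~\eqref{eq:CIIP} and then replay the ruler construction inside the generic model while tracking two coordinate systems. First I would apply~\eqref{eq:CIIP} with $\SF K=\SF E$ to every constraint of type~\ref{Constraints:IP}. As $\Sigma$ is principally regular we have $\pr{\SF E:\Sigma}\neq0$, so $\Sigma_{\SF E}$ is invertible and $\I{ab|xyz}$ forces $\Sigma_\SF{ab}=\Sigma_{\SF a,\SF E}\,\Sigma_{\SF E}^{-1}\,\Sigma_{\SF E,\SF b}=\EIP{a,b}$, where $\EIP{\cdot,\cdot}$ is the symmetric, non-degenerate form with Gram matrix $\Sigma_{\SF E}^{-1}$ and $a,b$ abbreviate the coordinate vectors stored in rows $\SF a,\SF b$ over $\SF E$. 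This is precisely the shape recorded in~\Cref{fig:IncidenceMatrix}, and it already expresses every $\SF{PL}\times\SF{PL}$ off-diagonal entry as a function of the coordinate blocks and of $\Sigma_{\SF E}$, which is half of the determinacy claim~\ref{Model:Determined}. For~\ref{Model:Incidence} I combine this with~\ref{Constraints:I}: an imposed incidence $\I{pl|}$ is a $1\times1$ almost-principal minor, hence reads $\Sigma_\SF{pl}=0$, and together with $\Sigma_\SF{pl}=\EIP{p,\ell}$ this yields $\EIP{p_i,\ell_j}=0$; rewriting $\EIP{p,\ell}=\IP{\Sigma_{\SF E}^{-1}p,\ell}$ turns it into $p_i\in\Sigma_{\SF E}(\ell_j^\perp)$.

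Next I would decode the purely positional constraints. With $\SF K=\emptyset$ the almost-principal minor degenerates to a single entry, so $\I{ae|}\Leftrightarrow\Sigma_\SF{ae}=0$. Feeding this into~\ref{Constraints:C} pins the coordinate vectors of $\bm\infty_x,\bm\infty_y,\bm0$ to the standard zero-patterns—hence to the standard projective points—while forcing all three coordinates of $\bm1$ to be nonzero; normalising the $\SF z$-entry to $1$ gives $\tilde{\bm1}=\PC{s_x:s_y:1}$. Constraint~\ref{Constraints:T} imposes $t^y=0$ and $t^z\neq0$, so each $\bm t_i$ is a finite $x$-axis point $\PC{t_i:0:1}$, and~\ref{Constraints:PL} guarantees via~\Cref{Preparation} that every point and line carries a genuinely nonzero coordinate vector.

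The core is an induction over the construction steps proving the invariant that \emph{each constructed point carries its standard-ruler coordinates as raw entries, whereas each constructed line carries $\Sigma_{\SF E}$ times its standard-ruler coordinates}. The engine is the identity $\EIP{p,\Sigma_{\SF E}\ell_{\mathrm{std}}}=\IP{p,\ell_{\mathrm{std}}}$, which converts each $\EIP$-incidence between a raw point and a raw line into an ordinary incidence between $p$ and $\ell_{\mathrm{std}}$. Thus a join of distinct points $p,q$ (with $\EIP{p,\ell}=\EIP{q,\ell}=0$) forces $\ell_{\mathrm{std}}=p\cross q$, i.e.\ the raw line $\Sigma_{\SF E}(p\cross q)$, and a meet of distinct lines $\ell,\ell'$ forces the raw point $\ell_{\mathrm{std}}\cross\ell'_{\mathrm{std}}$; both reproduce the standard construction verbatim. \Cref{Preparation} supplies the distinctness that makes each of these $2\times3$ linear systems one-dimensional, so every output is unique, and since the model construction coincides step-by-step with the standard one the same distinctness is available throughout. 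This settles the rest of claim~\ref{Model:Coords}: the incidence sets of $\ell_x,\ell_y,\ell_\infty$ are exactly $\set{p^y=0},\set{p^x=0},\set{p^z=0}$ and depend only on the zero-patterns of $\bm\infty_x,\bm\infty_y,\bm0$, while the $\bm t_i$ and the $\bm f_i$—each obtained by a final meet with $\ell_x$—lie on the $x$-axis. Determinism then closes~\ref{Model:Determined}: once $\bm t_i$, the scalings $s_x,s_y$ and the $\Sigma_{\SF E}$ block are fixed, every later point and line is uniquely determined, and by the first paragraph all remaining off-diagonal entries equal the corresponding $\EIP{a,b}$.

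The hard part will be the two-coordinate bookkeeping: points stay raw while lines are twisted by $\Sigma_{\SF E}$, and one must verify that the invariant survives \emph{both} primitives and that this twist is exactly what collapses $\EIP$-incidence to ordinary incidence. A secondary subtlety is that the construction begins from $\tilde{\bm1}$ rather than $\bm1$, which merely rescales the synthetic $x$-axis arithmetic by $s_x,s_y$; because the other three basis points and the three distinguished lines are untouched, the incidence combinatorics—and hence the membership of the $\bm t_i,\bm f_i$ on the $x$-axis—remain intact.
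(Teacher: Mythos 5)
Your proposal is correct, and for parts \ref{Model:Coords} and \ref{Model:Incidence} it follows the paper's route exactly: decode \ref{Constraints:C}--\ref{Constraints:PL} (order-zero CI~statements are single entries, so the basis is pinned up to $s_x, s_y \neq 0$ and all stored coordinate vectors are nonzero), then apply the Schur expansion \eqref{eq:CIIP} to \ref{Constraints:IP} so that $\Sigma_\SF{ab} = \EIP{a,b}$ and \ref{Constraints:I} becomes $\EIP{p_i,\ell_j} = 0$. Where you genuinely diverge is part \ref{Model:Determined}: the paper never computes what the constructed coordinates actually are --- it observes that, since no coordinate vector can degenerate to zero (type \ref{Constraints:PL}) and \Cref{Preparation} guarantees every join and meet is between distinct objects for all positions of the indeterminates, the determinism of ruler constructions (each step solves a rank-two linear system with one-dimensional solution space, now expressed in the form $\EIP{\cdot,\cdot}$) already yields uniqueness, and \ref{Constraints:IP} then fixes the remaining off-diagonal entries as functions of the coordinates. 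Your explicit induction with the invariant ``points raw, lines twisted by $\Sigma_\SF{E}$'' is precisely the computation the paper postpones to \Cref{Evaluation}, where it appears with the scaling made explicit: if $\ell$ is constructed from $p_i = Sp_i'$ then $\ell = \Sigma_\SF{E} S^{-1} \ell'$, and conversely. So your argument is stronger than the lemma needs --- it proves \Cref{Evaluation} en passant --- at the price of the bookkeeping you yourself flag. The one caution: your invariant as literally stated (``standard-ruler coordinates'') is false unless the scaling $S = \mathrm{diag}(s_x, s_y, 1)$ is built into it, since the model's construction starts from $\tilde{\bm1}$ rather than $\bm1$; you defuse this only informally in your closing paragraph, but in a written-out induction every comparison with the standard construction is off by $S$ on points and by $\Sigma_\SF{E} S^{-1}$ on lines, and this must be carried through both primitives.
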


\begin{proof}
\ref{Model:Coords}
By the relations~\ref{Constraints:C}, we have $\tilde{\bm\infty}_x =
\PC{1:0:0}$, $\tilde{\bm\infty}_y = \PC{0:1:0}$, $\tilde{\bm0} = \PC{0:0:1}$
and $\tilde{\bm1} = \PC{s_x:s_y:1}$ as points in the projective plane,
with $s_x, s_y \not= 0$. This is still a projective basis and the
$x$-axis, the $y$-axis and the line at infinity remain the same.
This is consistent with constraints~\ref{Constraints:T}, proving
that indeterminate points are on the $x$-axis. The $\bm f_i$ are
constructed by von~Staudt as intersection points with $\ell_x$,
so they remain on the $x$-axis. Because of constraints~\ref{Constraints:PL}
all homogeneous coordinate vectors are non-zero and hence valid
points/lines in~$\BBP K^2$.

\ref{Model:Incidence}
Denote by $\EIP{v,w} \defas v^\trans \, \Sigma_{\SF E}^{-1} \, w$ the
non-degenerate symmetric bilinear form defined by $\Sigma_{\SF E}^{-1} =
\Sigma_{\SF{xyz}}^{-1}$.
The relations $(\SF p_i \SF l_j|\SF{xyz})$ of type~\ref{Constraints:IP}
are equivalent to
\[
  \Sigma_{\SF p_i \SF l_j} \overset{!}{=}
    \Sigma_{\SF p_i,\SF{xyz}} \, \Sigma_{\SF{xyz}}^{-1} \, \Sigma_{\SF{xyz},\SF l_j}
    = \EIP{p_i, \ell_j}
\]
and then type~\ref{Constraints:I} makes this scalar product vanish, for
every relation $\SF p_i \in \SF l_j^\perp$~requested.

\ref{Model:Determined}
Since all points and lines are valid objects in $\BBP K^2$~--- in particular
due to type~\ref{Constraints:PL}, the zero vector is never permissible as
a vector of homogeneous coordinates (even though it satisfies all incidence
relations it may be involved in), thus the construction never degenerates~---,
the~uniqueness of the result of the von~Staudt construction in~\Cref{Preparation}
proves that all points and lines are uniquely determined by the starting points,
which are the projective basis and the indeterminates, as well as the definition
of incidence. Relations~\ref{Constraints:IP} then fix all off-diagonal entries
on $\SF{PL} \times \SF{PL}$ as functions of the homogeneous coordinates,
as per~\eqref{eq:CIIP}.
\end{proof}

\Cref{Model} shows that constraints~\ref{Constraints:C} for the standard
projective basis fix the \emph{standard} projective basis in every model
up to a scaling of the $x$- and $y$-axis by non-zero quantities $s_x$~and~$s_y$.
The points $\bm f_i$ which correspond to the evaluations of polynomials
$f_i$ end up on the $x$-axis and their location is uniquely determined by
the scalings, the bilinear form and the locations of $\bm t_i$.
The next \namecref{Evaluation} makes this more precise:

\begin{lemma} \label{Evaluation}
Let $\Sigma \in \tilde{\C V}_{\BB K}(F)$. Denote by $\bm t_i = \PC{t_i:0:1}$
and $\bm f_i = \PC{f_i^x:0:1}$ the points in $\BBP K^2$ determined by
$\Sigma$ according to \Cref{Model}.
Then $f_i^x = s_x f_i(t_1/s_x, \dots, t_k/s_x)$.
\end{lemma}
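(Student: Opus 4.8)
The plan is to recognize that, once the bilinear form is absorbed into the line coordinates, the matrix $\Sigma$ carries out the \emph{ordinary} von~Staudt construction started from a rescaled projective basis, and then to undo that rescaling by a single diagonal projective transformation. First I would use part~\ref{Model:Incidence} of \Cref{Model} to remove the bilinear form defined by $\Sigma_\SF{E}^{-1}$ from the picture. For a join of distinct points $p, q$ the stored line~$\ell$ satisfies $p^\trans \Sigma_\SF{E}^{-1} \ell = q^\trans \Sigma_\SF{E}^{-1} \ell = 0$, whence $\ell = \Sigma_\SF{E}(p \cross q)$; dually, for a meet of distinct lines $\ell, \ell'$ the stored point equals $(\Sigma_\SF{E}^{-1}\ell) \cross (\Sigma_\SF{E}^{-1}\ell')$. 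Thus the stored line coordinates differ from the plain cross product only by the invertible factor $\Sigma_\SF{E}$, which is cancelled by $\Sigma_\SF{E}^{-1}$ at the next meet. Consequently the stored \emph{point} coordinates---in particular those of the $\bm f_i$---are exactly the points produced by the classical construction of \Cref{SolveProblemF} with plain cross-product semantics; the only change is that the starting configuration is now the basis of part~\ref{Model:Coords} of \Cref{Model}, whose unit is the rescaled $\tilde{\bm1} = \PC{s_x:s_y:1}$ while $\bm\infty_x$, $\bm\infty_y$, $\bm0$ and the indeterminates are unchanged.

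Next I would introduce the diagonal map $T = \operatorname{diag}(s_x, s_y, 1) \in \GL_3$. It fixes $\bm\infty_x$, $\bm\infty_y$, $\bm0$ and sends $\bm1 = \PC{1:1:1}$ to $\tilde{\bm1}$, so it carries the standard projective basis to the model's basis, and it acts on the $x$-axis by $\PC{a:0:1} \mapsto \PC{s_x a:0:1}$. Because the join and meet primitives are cross products, they are $\GL_3$-equivariant: under $p \mapsto Tp$ on points and $\ell \mapsto T^{-\trans}\ell$ on lines the incidence pairing $p^\trans \ell$ is preserved, so applying $T$ to every input point of the construction applies $T$ to every output point as well (no meet or join degenerates, by \Cref{Preparation}). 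Writing the model's indeterminate as $\bm t_i = \PC{t_i:0:1} = T\,\PC{t_i/s_x:0:1}$, the entire input configuration is $T$ applied to the standard basis together with the points $\PC{t_i/s_x:0:1}$; hence $\bm f_i = T \cdot \bigl(\text{classical construction on inputs } \PC{t_i/s_x:0:1}\bigr)$.

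Finally I would invoke \Cref{SolveProblemF}: on the standard basis the construction evaluates the polynomials, so on the inputs $\PC{t_i/s_x:0:1}$ its output is $\PC{f_i(t_1/s_x, \dots, t_k/s_x):0:1}$. Applying $T$ multiplies the first coordinate by $s_x$ and leaves the last fixed, which gives $f_i^x = s_x\, f_i(t_1/s_x, \dots, t_k/s_x)$, as claimed; the scaling $s_y$ never appears because every relevant point lies on the $x$-axis. I expect the first step to be the main obstacle: one must check cleanly that the bilinear form perturbs only the line coordinates and that this perturbation cancels at every subsequent meet, so that the output points are governed purely by the standard-basis construction. Once this ``point semantics is standard'' reduction is in place, the equivariance bookkeeping---the input rescaling by $1/s_x$ set against the output scaling by $s_x$---is routine.
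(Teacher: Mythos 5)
Your proposal is correct and follows essentially the same route as the paper: the paper's proof also introduces the diagonal scaling $S = \operatorname{diag}(s_x,s_y,1)$ and proves by induction on the construction steps that points satisfy $p = Sp'$ while lines satisfy $\ell = \Sigma_\SF{E}S^{-1}\ell'$, which is exactly the composite of your two observations (the $\Sigma_\SF{E}$-twist on line coordinates cancelling at each meet, and $\GL_3$-equivariance of join and meet), before concluding via \Cref{SolveProblemF}. Your factorization into two separate reductions is merely a presentational variant of the paper's single combined induction.
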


\begin{proof}
Let $S = \begin{psmallmatrix} s_x & 0 & 0 \\ 0 & s_y & 0 \\ 0 & 0 & 1
\end{psmallmatrix}$ be the scaling which maps the standard projective
basis to the one contained in~$\Sigma$.
Use $p'$ and $\ell'$ to refer to points constructed with the von~Staudt
algorithm and the standard basis and use $p$ and $\ell$ for the same
objects constructed by~$\Sigma$ with the scaled basis and non-standard
scalar product.
Consider also the points $\bm t_i' = \PC{t_i/s_x:0:1}$. Then we obtain
the projective basis and the indeterminates of~$\Sigma$ as images under~$S$
of the standard basis and the indeterminates~$\bm t_i'$.
It is straightforward to show by induction on the steps of the ruler
construction using \Cref{Model}~\ref{Model:Incidence} that:
\begin{itemize}
\item
If $\ell$ is constructed from $p_1$ and $p_2$ with
$p_i = S p_i'$, then $\ell = \Sigma_\SF{E} S^{-1} \ell'$.

\item
If $q$ is constructed in turn from $\ell_1$ and $\ell_2$ with
$\ell_i = \Sigma_\SF{E} S^{-1} \ell_i'$, then~$q = S q'$.
\end{itemize}
Thus, $\bm f_i = S \bm f_i'$ and $f_i^x = s_x f_i'^x = s_x f_i(t_1/s_x, \dots, t_k/s_x)$
by \Cref{SolveProblemF}.
\end{proof}

\pagebreak 

\begin{lemma} \label{ModelEval}
Let $a_1, \dots, a_k \in \BB K$ be arbitrary. If $\BB K$ is infinite,
then $\tilde{\C V}_{\BB K}(F)$ has a model which gives indeterminates
the value $\bm t_i = \PC{a_i:0:1}$ and evaluations $\bm f_i =
\PC{f_i(a_1, \dots, a_k):0:1}$.
Likewise for $\tilde{\C K}_{\BB K}(F)$ if $\BB K$ is ordered.
\end{lemma}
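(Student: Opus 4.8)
The plan is to build a single matrix $\Sigma$ that realizes the standard von~Staudt construction literally, and then exploit the freedom in its diagonal. First I would fix the block $\Sigma_\SF E = I$, so that the bilinear form $\EIP{\cdot,\cdot}$ of \Cref{Model}~\ref{Model:Incidence} is the ordinary scalar product $\IP{\cdot,\cdot}$ and the notion of incidence in every step agrees with the usual one. Placing the unit point at $\PC{1:1:1}$ forces $s_x = s_y = 1$ in the language of \Cref{Model}, and I set the indeterminates to $\bm t_i = \PC{a_i:0:1}$. Running the von~Staudt construction (\Cref{SolveProblemF}) then produces genuine points and lines of $\BBP K^2$, and \Cref{Evaluation} with $s_x = 1$ yields exactly $\bm f_i = \PC{f_i(a_1,\dots,a_k):0:1}$. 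I record non-zero homogeneous representatives of all these points and lines (available by \Cref{Preparation}) in the $\SF{PL}\times\SF E$ block and their scalar products $\IP{\cdot,\cdot}$ in the off-diagonal $\SF{PL}\times\SF{PL}$ block. Every constraint of \Cref{Constraints} then holds by construction: \ref{Constraints:C}, \ref{Constraints:T} and \ref{Constraints:PL} record the vanishing pattern of the coordinates, \ref{Constraints:IP} holds because the relevant entries are set equal to $\EIP{\cdot,\cdot}$, and \ref{Constraints:I} holds because the prescribed incidences are genuine orthogonalities in $\BBP K^2$. Crucially, the only entries not yet pinned down are the diagonal entries $\Sigma_\SF{pp}$ for $\SF p \in \SF{PL}$, and these occur in no constraint, since every CI~symbol used has conditioning set either $\emptyset$ (an off-diagonal condition) or $\SF{xyz}$ (which involves only the fixed $\SF E$-block).

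It remains to choose these free diagonal entries so that $\Sigma$ is principally regular, and this is where the hypothesis that $\BB K$ is infinite enters. Regarding the $\Sigma_\SF{pp}$ ($\SF p \in \SF{PL}$) as indeterminates, each principal minor $\pr{\SF K : \Sigma} = \det\Sigma_\SF K$ is a polynomial in them whose top-degree monomial is $\prod_{\SF p\in\SF K\cap\SF{PL}}\Sigma_\SF{pp}$ with coefficient $\det\Sigma_{\SF K\cap\SF E} = 1$; in particular each $\pr{\SF K : \Sigma}$ is a nonzero polynomial. Since there are only finitely many subsets $\SF K$, their product is a nonzero polynomial, and over an infinite field it has a point at which it does not vanish. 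Any such assignment of the $\Sigma_\SF{pp}$ makes all principal minors nonzero, so the resulting $\Sigma$ is principally regular and lies in $\tilde{\C V}_{\BB K}(F)$ with the prescribed $\bm t_i$ and $\bm f_i$.

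For the positive model over an ordered field $\BB K$ I would instead tune the scale of the coordinates. Multiplying every point- and line-vector by a common factor $\epsilon$ changes neither the projective data nor the incidences nor the vanishing pattern, but makes the $\SF{PL}\times\SF E$ entries proportional to $\epsilon$ and the scalar-product entries proportional to $\epsilon^2$. Keeping $\Sigma_\SF E = I$ and setting every diagonal entry equal to $1$, I choose $\epsilon$ small enough (explicitly, smaller than the reciprocal of a bound on the absolute off-diagonal row-sums of the unscaled matrix) that $\Sigma$ becomes strictly diagonally dominant with positive diagonal. A symmetric matrix of this form is positive-definite over any ordered field, and positive-definiteness forces all principal minors to be positive, hence nonzero. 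Thus $\Sigma \in \tilde{\C K}_{\BB K}(F)$, again with the prescribed indeterminates and evaluations.

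The main thing to get right is the bookkeeping in the first paragraph: that fixing $\Sigma_\SF E = I$ together with the unit point $\PC{1:1:1}$ is consistent and gives $s_x = s_y = 1$, and that the $\SF{PL}$-diagonal entries are genuinely unconstrained. Once this is established, the two existence arguments are soft --- the nonvanishing of a finite product of nonzero polynomials over an infinite field for principal regularity, and diagonal dominance for positive-definiteness --- and the infinitude of $\BB K$ is used exactly, and only, to guarantee the former.
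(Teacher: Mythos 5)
Your proposal is correct and takes essentially the same route as the paper: build $\Sigma$ from the generic incidence matrix with the actual standard basis and $\Sigma_\SF{E} = I$, observe that the $\SF{PL}$-diagonal is untouched by the constraints, and choose it generically so that every principal minor (a nonzero polynomial in these diagonal variables, by the same Schur-complement/top-coefficient observation) survives over an infinite field, with diagonal dominance handling the positive-definite case. The only cosmetic difference is in the ordered-field step, where you scale all coordinate vectors by a small $\epsilon$ with unit diagonal instead of enlarging the $\SF{PL}$-diagonal entries as the paper does; both are the same diagonal-dominance trick, and your check that rescaling preserves all constraint types is the right bookkeeping to make that variant work.
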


\begin{proof}
A model $\Sigma$ can be constructed based on \Cref{fig:IncidenceMatrix}.
Fill the coordinates of points of $\SF P$ corresponding to the standard
projective basis with the actual standard projective basis, set the
indeterminate points $\bm t_i$ as required and finally set $\Sigma_\SF{E}$
to be the identity matrix. Then for every incidence $\SF p \in \SF l^\perp$
demanded by the von~Staudt construction we have, by
\Cref{Model}~\ref{Model:Incidence} with $\EIP{\cdot,\cdot} = \IP{\cdot,\cdot}$
that indeed $\IP{p,\ell} = 0$, i.e.\ $p \in \ell^\perp$. Execute the
von~Staudt construction from these settings and all off-diagonal entries
will be filled to satisfy the~constraints.

This gives values to all entries of~$\Sigma$ except for the diagonals
$p_i^*$ and $\ell_j^*$ as shown in \Cref{fig:IncidenceMatrix}. These
diagonals are used to make $\Sigma$ principally regular. Consider any
principal submatrix of~$\Sigma$:
\[
  \qquad
  \Sigma_\SF{UV} = \kbordermatrix{
        & \SF U & \SF V \\
  \SF U &   A   &   B   \\
  \SF V & B^\trans & C
  }, \;
  \text{with $\SF U \subseteq \SF{PL}$ and $\SF V \subseteq \SF{E}$}.
\]
The $C$ block is a principal submatrix of $\Sigma_\SF{E}$, which is the
identity matrix, so $C$ is an identity as~well. Then it suffices to
show that the Schur complement $A - B B^\trans$ of~$C$ is regular.
The diagonals $p_i^*$ and $\ell_j^*$ appear only in $A$.
Similarly to \cite[Lemma~4.1]{Gaussant} one can show that a generic
choice of diagonal elements over an infinite field makes the determinant
of this Schur complement non-zero. If~$\BB K$~is ordered, the same
argumentation yields positivity --- $C$~is~positive-definite and one
may even choose the diagonals~of~$A$ so large that the Schur complement
becomes diagonally dominant and therefore positive-definite.
\end{proof}

\begin{definition} \label{ConstraintsZero}
Let $F = \set{f_1, \dots, f_r} \subseteq \BB Z[t_1, \dots, t_k]$.
Denote by $\C I(F)$ the extension of $\tilde{\C I}(F)$ by
\begin{enumerate}[label=($\C I$.\roman*),resume=Constraints]
\item \label{Constraints:F}
$\I{fx|}$ for all polynomial value symbols~$\SF f = \SF f_1, \dots, \SF f_r$
\end{enumerate}
and let $\C V_{\BB K}(F) \defas \C V_{\BB K}(\C I(F))$ and
$\C K_{\BB K}(F) \defas \C K_{\BB K}(\C I(F))$ over ordered fields.
\end{definition}

\begin{proposition} \label{vonStaudt}
Let $F = \set{f_1, \dots, f_r} \subseteq \BB Z[t_1, \dots, t_k]$.
The set $\C I(F)$ of CI~constraints has a (principally regular) model
over an infinite field $\BB K$ if and only the variety of $F$ (over
the algebraic closure $\ol{\BB K}$) has a $\BB K$-rational point.
The same is true for ordered fields and positive~models.
\end{proposition}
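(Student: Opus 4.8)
The plan is to prove both directions of the equivalence by leveraging the infrastructure already established in Lemmas~\ref{Model}, \ref{Evaluation} and \ref{ModelEval}, which reduce the problem to a clean statement about the $x$-coordinates of the polynomial-value points~$\bm f_i$ forced by any model.

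\emph{The easy direction} is to show that if the variety of~$F$ has a $\BB K$-rational point, then $\C V_{\BB K}(F)$ is non-empty. Suppose $(a_1, \dots, a_k) \in \BB K^k$ satisfies $f_i(a_1, \dots, a_k) = 0$ for all~$i$. By \Cref{ModelEval}, since $\BB K$ is infinite, there is a model $\Sigma \in \tilde{\C V}_{\BB K}(F)$ realizing indeterminates $\bm t_i = \PC{a_i:0:1}$ and value points $\bm f_i = \PC{f_i(a_1, \dots, a_k):0:1} = \PC{0:0:1} = \bm 0$. The defining feature of this model (the scaling block $\Sigma_\SF E$ is the identity, so the scalings satisfy $s_x = s_y = 1$) means each $\bm f_i$ lands exactly at the origin, whose $x$-coordinate vanishes. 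Hence $\Sigma$ already satisfies the extra constraints~\ref{Constraints:F}, namely $\I{fx|}$ asserting $f_i^x = 0$, and therefore $\Sigma \in \C V_{\BB K}(F)$. The same construction over an ordered field, using the positive model produced by \Cref{ModelEval}, handles the $\C K_{\BB K}(F)$ case.

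\emph{The converse direction} is where the main subtlety lies, because a model need not carry the standard basis: by \Cref{Model}~\ref{Model:Coords} the unit point is $\tilde{\bm 1} = \PC{s_x:s_y:1}$ for possibly non-trivial scalings $s_x, s_y \neq 0$, and incidence is measured against the twisted bilinear form~$\EIP{\cdot,\cdot}$. Suppose $\Sigma \in \C V_{\BB K}(F)$. By \Cref{Model} the entries of~$\Sigma$ encode points $\bm t_i = \PC{t_i:0:1}$ and $\bm f_i = \PC{f_i^x:0:1}$, and the additional constraint~\ref{Constraints:F} forces $f_i^x = 0$ for every~$i$. Now \Cref{Evaluation} supplies the crucial identity $f_i^x = s_x\, f_i(t_1/s_x, \dots, t_k/s_x)$. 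Since $s_x \neq 0$, the vanishing of $f_i^x$ is equivalent to $f_i(t_1/s_x, \dots, t_k/s_x) = 0$. Setting $a_j \defas t_j/s_x \in \BB K$ then yields a $\BB K$-rational point $(a_1, \dots, a_k)$ on the variety of~$F$, as desired.

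\emph{The main obstacle} I anticipate is making sure the rescaling argument in the converse is watertight across \emph{all} of the atomic equations simultaneously, rather than polynomial-by-polynomial. The division by~$s_x$ in \Cref{Evaluation} is harmless for the solvability question precisely because $s_x$ is a single global nonzero scalar shared by every indeterminate and every value point; the potentially worrying point is that the scalings $s_x, s_y$ and the Gram block $\Sigma_\SF E$ are extra degrees of freedom in the model that do \emph{not} appear in the original system~$F$. One must confirm these auxiliary parameters are genuinely free (so a model exists for any rational solution, via \Cref{ModelEval}) yet do not manufacture spurious solvability (so the extracted point $(t_j/s_x)$ always solves the \emph{integer} system~$F$). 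Both facts are already encoded in the evaluation formula of \Cref{Evaluation}, so once that lemma is invoked the argument closes; I would take care to state explicitly that the correspondence is at the level of solvability over~$\BB K$ and not a birational equivalence of solution sets, matching the scope announced in \Cref{sec:PolynomialRuler}.
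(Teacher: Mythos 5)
Your proposal is correct and matches the paper's own proof essentially step for step: the forward direction extracts a solution $(t_j/s_x)$ from any model via \Cref{Evaluation} and constraint~\ref{Constraints:F} using $s_x \neq 0$, and the converse builds a model from a $\BB K$-rational point via \Cref{ModelEval}, exactly as in the paper (you merely present the two directions in the opposite order and add some correct but unneeded commentary on the role of the scalings).
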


\begin{proof}
Suppose that there exists a model $\Sigma \in \C V_{\BB K}(F)$ which
contains points $\tilde{\bm1} = \PC{s_x:s_y:1}$ as well as $\bm t_i =
\PC{t_i:0:1}$ and $\bm f_i = \PC{f_i^x:0:1}$. The homogeneous coordinates
are unique up to a scalar from $\BB K$. Then by \Cref{Evaluation} and
constraint~\ref{Constraints:F}, we have $0 = f_i^x = s_x f_i(t_1/s_x,
\dots, t_k/s_x)$. Since $s_x \not= 0$ and $t_i/s_x \in \BB K$, these
define a solution to~$F$~in~$\BB K$.

Conversely, let $a_1, \dots, a_k \in \BB K$ be a solution to~$F$.
Then they define a model of $\tilde{\C V}(F)$ by \Cref{ModelEval}.
Moreover, since the $a_i$ are roots of the polynomials, the
constraints~\ref{Constraints:F} are satisfied by~\Cref{ModelEval}.
The proof works likewise for $\C K$.
\end{proof}

\section{Field extensions and a question of Šimeček}
\label{sec:MacLane}

In this section we apply \Cref{vonStaudt} to prove a universality result
for Gaussian CI~constraints and field extensions which can be used to answer
a question of Petr~Šimeček. We work in the Gaussian setting, which means
ordered fields below $\BB R$. The central notion is that of \emph{semialgebraic degree},
which measures the algebraic complexity of semidefinite Gaussian~models.

Satisfying a set of CI~constraints is equivalent to solving a system of
polynomial equations, inequations and inequalities with integer coefficients.
Tarski's transfer principle~\cite[Chapter~2.4]{ModelTheory} implies that
if a model exists over some ordered field~$\BB K$, then there exists a
model in the real algebraic numbers $\ol{\BB Q} \cap \BB R$. Since every
entry of such a model has finite algebraic degree over~$\BB Q$,
a model can already be found in a finite real extension of~$\BB Q$.
Therefore it is sensible to measure the algebraic complexity of a CI~model
in terms of field extension degrees. This~quantity is finite whenever the
constraints are satisfiable at all.

\begin{definition}
The \emph{semialgebraic degree} of~$\C I$ is the minimal extension degree
over~$\BB Q$ which is required to satisfy~$\C I$:
\[
  \sdeg \C I \defas
    \min_{\Sigma \in \C K_{\BB R}(\C I)}
    \max_{ij} \deg_{\BB Q} \Sigma_{ij}.
\]
\end{definition}

The reference to $\BB R$ in this definition is natural in the context of
statistics, but the notion remains the same if $\BB R$ is replaced by any
real-closed field such as $\ol{\BB Q} \cap \BB R$. Notice that both,
conditional independence and dependence statements, are necessary to
make this notion interesting: without dependence statements, the identity
matrix satisfies the constraints, whereas without independence statements,
any generic rational positive-definite matrix does.

Petr Šimeček in~\cite{SimecekGaussian} asked if every non-empty Gaussian
CI~model has a rational point. For the special case of regular Gaussians,
we can rephrase this question as follows:

\begin{named}{Šimeček's Question} \label{SimecekQ}
If the semialgebraic degree of a model is finite, then is it always~one?
\end{named}

Using the von~Staudt constructions of \Cref{vonStaudt}, we are able to
recreate the proof of MacLane~\cite{MacLane}, which strongly implies a
negative answer to this question.

\begin{theorem} \label{MacLane}
Let $\BB K / \BB Q$ be a real, finite field extension. There exists a set~$\C I$
of CI~constraints such that $\C K_{\BB L}(\C I)$ is non-empty for some extension
$\BB L / \BB Q$ if and only if there exists an embedding of fields~$\BB K
\hookrightarrow \BB L$.
\end{theorem}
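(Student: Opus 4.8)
The plan is to reduce \Cref{MacLane} to \Cref{vonStaudt} by packaging the field $\BB K$ into a single univariate polynomial. Since $\BB K/\BB Q$ is finite and we are in characteristic zero, the primitive element theorem yields a generator $\alpha$ with $\BB K = \BB Q(\alpha)$. Let $p \in \BB Q[t]$ be the minimal polynomial of $\alpha$ and clear denominators to obtain an integer multiple $f \defas c \cdot p \in \BB Z[t]$; this $f$ remains irreducible over $\BB Q$ and has exactly the roots of $p$. I would then set $F \defas \set{f} \subseteq \BB Z[t_1]$, a system in the single indeterminate $t_1$, and take $\C I$ to be the constraint set $\C I(F)$ of \Cref{ConstraintsZero}.

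With this choice the work is essentially done by the universality result already established. Applying the positive-model version of \Cref{vonStaudt} to the infinite ordered field $\BB L/\BB Q$, the model $\C K_{\BB L}(\C I) = \C K_{\BB L}(F)$ is non-empty if and only if the variety of $F$ has an $\BB L$-rational point, which here means exactly that $f$ possesses a root $\beta \in \BB L$. So the geometric side of the problem has been converted into a purely arithmetic condition on $\BB L$.

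It then remains to identify ``$f$ has a root in $\BB L$'' with ``$\BB K$ embeds into $\BB L$,'' which is the classical correspondence underlying MacLane's argument. Because $f$ is irreducible over $\BB Q$, evaluation $t \mapsto \beta$ induces an isomorphism $\BB Q[t]/(f) \cong \BB Q(\beta) \subseteq \BB L$, and since $\BB Q[t]/(f) \cong \BB Q(\alpha) = \BB K$, every root $\beta$ of $f$ in $\BB L$ furnishes an embedding $\BB K \hookrightarrow \BB L$ sending $\alpha \mapsto \beta$. Conversely, any embedding $\varphi \colon \BB K \hookrightarrow \BB L$ must carry $\alpha$ to a root of its minimal polynomial, hence to a root of $f$ in $\BB L$. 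Chaining this equivalence with the one from the previous paragraph proves the theorem.

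The deduction is short granted \Cref{vonStaudt}, so the only point that genuinely needs care is the interaction between the order structure and the hypothesis that $\BB K/\BB Q$ is \emph{real}. Since $\C K_{\BB L}$ is defined only for ordered $\BB L$, the embeddings produced above land in an ordered field and therefore automatically display $\BB K$ as formally real; the realness hypothesis is precisely what makes this consistent and guarantees that the model can in fact be non-empty. Concretely, fixing an embedding $\BB K \hookrightarrow \BB R$ places a real root $\alpha$ of $f$ at our disposal, so $\C K_{\BB R}(\C I) \neq \emptyset$ witnesses both sides simultaneously. I expect this compatibility check---confirming that the encoding behaves well when passing to ordered coefficient fields rather than only to algebraically closed ones, as in the matroid setting---to be the main conceptual hurdle, whereas the root-versus-embedding bookkeeping is routine field theory.
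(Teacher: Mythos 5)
Your proposal is correct and follows essentially the same route as the paper: both invoke the primitive element theorem to reduce $\BB K$ to a single irreducible polynomial $f \in \BB Z[t]$, apply the positive-model version of \Cref{vonStaudt} to $F = \set{f}$, and finish with the standard root-versus-embedding correspondence $\BB Q[t]/(f) \cong \BB K$. Your additional remarks---clearing denominators to land in $\BB Z[t]$ and the role of the realness hypothesis in keeping the ordered-field statement non-vacuous---are details the paper leaves implicit, but they do not change the argument.
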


\begin{proof}
The prime field $\BB Q$ is perfect and hence by the primitive element
theorem the finite extension~$\BB K$ has a primitive element $\alpha$
over $\BB Q$ with minimal polynomial~${f \in \BB Z[t]}$.
Application of \Cref{vonStaudt} to $F = \set{f}$ produces a
constraint~set~$\C I(f)$ which has a positive model over $\BB L$ if
and only if $\BB L$ contains a root of~$f$. By~standard facts about
field extensions, this implies that $\BB K$ is contained in any
field~$\BB L$ which has a model of~$\C I(f)$.
\end{proof}

\begin{corollary}
All real algebraic numbers are necessary to witness the non-emptiness
of regular Gaussian CI~models.
\qed
\end{corollary}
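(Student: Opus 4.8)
The plan is to deduce this directly from \Cref{MacLane} by letting the extension $\BB K/\BB Q$ be generated by an arbitrary real algebraic number.

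First I would fix an arbitrary $\alpha \in \ol{\BB Q} \cap \BB R$ and set $\BB K = \BB Q(\alpha)$; since $\alpha$ is real and algebraic over $\BB Q$, this is a real, finite extension. Applying \Cref{MacLane} to this $\BB K$ yields a CI constraint set $\C I$ whose positive model $\C K_{\BB L}(\C I)$ is non-empty precisely when there is an embedding $\BB K \hookrightarrow \BB L$.

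Next I would unwind the embedding condition: as $\BB K = \BB Q(\alpha)$, an embedding $\BB K \hookrightarrow \BB L$ exists if and only if $\BB L$ contains a root of the minimal polynomial $f$ of $\alpha$, i.e.\ a real conjugate of $\alpha$; this is exactly the root-containment criterion of \Cref{vonStaudt} applied to $F = \set{f}$. Taking $\BB L = \BB Q(\alpha)$ shows that the model is genuinely inhabited and that $\alpha$ itself occurs in a realization, while no real field avoiding every conjugate of $\alpha$ can carry a model. Allowing $\alpha$ to range over all of $\ol{\BB Q} \cap \BB R$ then forces every real algebraic number to appear as a required entry in the minimal realization of some non-empty regular Gaussian CI~model, which is the claim.

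The one delicate point I would flag is the distinction between $\alpha$ and its Galois conjugates: the encoding built from $f$ only compels \emph{some} real root of $f$ to lie in $\BB L$, not $\alpha$ in particular. For the coverage assertion this is immaterial, since each real algebraic number is among the roots of its own minimal polynomial; but to obtain a model that singles out $\alpha$ uniquely among its conjugates one would have to adjoin order inequalities separating $\alpha$ from the other roots of $f$, which reaches slightly beyond the purely equational von~Staudt encoding used above.
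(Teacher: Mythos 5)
Your proposal is correct and matches the paper's intended argument: the corollary is stated with \qed as an immediate consequence of \Cref{MacLane}, obtained exactly as you do by taking $\BB K = \BB Q(\alpha)$ for an arbitrary real algebraic $\alpha$ and noting that any field carrying a model must contain a root of the minimal polynomial of~$\alpha$, hence a copy of~$\BB Q(\alpha)$. Your caveat about Galois conjugates is a fair observation, but as you note it is immaterial here, since the claim concerns the fields required up to embedding rather than pinning down $\alpha$ among its conjugates.
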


\section{Hardness of the implication problem}
\label{sec:Implication}

A set of CI~constraints $\C I$ decomposes into two sets of CI~statements
$\C L$ and $\C M$ such that $\C I = \C L \cup \neg \C M$, where the
negation is applied to every element of~$\C M$. The model~$\C K_{\BB R}(\C I)$
is the set of counterexamples to the validity of the \emph{conditional
independence inference formula}
\[
  \varphi(\C I): \bigwedge \C L \;\Rightarrow\; \bigvee \C M,
\]
which expresses that every regular Gaussian distribution which satisfies
all independencies in~$\C L$ must satisfy at least one of the independencies
in~$\C M$.
Denote the decision problem about the validity (with respect to regular
Gaussians) of an input inference formula by $\GCI$. Since the set of
counterexamples $\C K_{\BB R}(\C I)$ is semialgebraic, $\GCI$ reduces
to $\ETR$, the decision problem for the existential theory of the
reals; see~\cite[Chapter~13]{AlgorithmsReal} for a definition and
algorithms.

In the previous sections we have cast incidence geometry in the plane
into Gaussian CI~constraints. Reasoning about incidence statements in
the plane is hard because the valid incidence theorems are the axioms
of linear rank-3 matroids, and so we expect that reasoning about Gaussian
conditional independence inference is hard as well. This is the main
result of this~section:

\begin{theorem} \label{ETR}
There exists a polynomial-time algorithm to compute, for a basic
semialgebraic set~$\C K$ defined by polynomial constraints with
integer coefficients, two sets $\C L$~and~$\C M$ of CI~statements
such that $\bigwedge \C L \;\Rightarrow\; \bigvee \C M$ is valid for
regular Gaussians if and only if~$\C K$ is empty.
\end{theorem}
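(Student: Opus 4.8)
The plan is to turn the feasibility question for $\C K$ into an emptiness question for a positive CI~model produced by \Cref{vonStaudt}, using the dictionary already recorded at the start of this section. If a constraint set is written $\C I = \C L \cup \neg\C M$, then the positive model $\C K_{\BB R}(\C I)$ is exactly the set of regular-Gaussian counterexamples to $\varphi(\C I)\colon \bigwedge \C L \Rightarrow \bigvee \C M$, so $\varphi(\C I)$ is valid if and only if $\C K_{\BB R}(\C I) = \emptyset$. It therefore suffices to compute, in polynomial time, a constraint set $\C I$ whose positive model over $\BB R$ is non-empty precisely when $\C K$ is non-empty; the desired $\C L$ and $\C M$ are then read off as the positively occurring and the negated statements of $\C I$, respectively.

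The only gap between this goal and \Cref{vonStaudt} is that the latter governs the real points of a \emph{variety}, whereas $\C K$ is cut out by equalities together with inequalities. I would close this gap with the square-slack (Rabinowitsch) trick, trading inequalities for equations at the cost of auxiliary variables while preserving real feasibility. Writing $\C K = \set{x : f_i(x) = 0,\ g_j(x) > 0,\ h_l(x) \ge 0}$, introduce fresh variables $u_j, v_l$ and form
\[
  F' = \set{f_i}\ \cup\ \set{g_j\, u_j^2 - 1}\ \cup\ \set{h_l - v_l^2} \subseteq \BB Z[t_1,\dots,t_k,u,v].
\]
A real point of $F'$ forces $g_j = u_j^{-2} > 0$ and $h_l = v_l^2 \ge 0$, and conversely any real point of $\C K$ extends to one of $F'$ via $u_j = g_j(x)^{-1/2}$, $v_l = h_l(x)^{1/2}$; hence $F'$ has a real solution if and only if $\C K \ne \emptyset$. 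All coefficients remain integral, so \Cref{vonStaudt} applies over the ordered field $\BB R$ and yields $\C I \defas \C I(F')$ with $\C K_{\BB R}(\C I) \ne \emptyset$ exactly when $\C K \ne \emptyset$. Combined with the first paragraph, $\varphi(\C I)$ is valid if and only if $\C K$ is empty, which is the stated equivalence.

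The remaining content is the polynomial-time bound, and this is where I expect the main obstacle to lie: keeping the size of the von~Staudt encoding polynomial in the \emph{bit-length} of the input, not merely finite. Two points need care. First, the input polynomials and the slack relations must be decomposed into the atomic operations $t_i = t_j + t_k$ and $t_i = t_j \cdot t_k$ of \Cref{ProblemF}; representing each polynomial by an arithmetic circuit (or Horner's scheme) makes the number of atomic gates, and thus of von~Staudt gadgets, polynomial in the input. Second, the integer coefficients must be synthesized from the unit point $\bm 1$ by repeated doubling and addition of the unit according to their binary expansion, so a coefficient of bit-length $b$ costs $O(b)$ gates rather than the exponentially many a unary accumulation would require. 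Each atomic gate then contributes a bounded number of labels to $\SF P, \SF L$ and a bounded number of constraints of types~\ref{Constraints:C}--\ref{Constraints:F} from \Cref{Constraints}, so both the ground set $\SF{PLE}$ and the constraint set $\tilde{\C I}(F')$ have polynomial size and are computable in polynomial time. The non-degeneracy and genericity guaranteed by \Cref{Preparation} ensure the construction is valid for every instantiation over $\BB R$, so no case analysis on the input is needed.

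Finally, since non-emptiness of an integer basic semialgebraic set over $\BB R$ is complete for $\ETR$, and emptiness is interreducible with it in polynomial time, the map $\C K \mapsto (\C L, \C M)$ is an $\ETR$-hardness reduction for $\GCI$; together with the containment $\GCI \le \ETR$ noted at the beginning of the section, this establishes the polynomial-time equivalence of $\GCI$ and $\ETR$.
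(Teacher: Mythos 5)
Your proposal is correct and takes essentially the same route as the paper: the same square-slack substitutions ($g\,u^2 - 1 = 0$ for strict and $h - v^2 = 0$ for weak inequalities, justified over $\BB R$ as a Euclidean field), the same appeal to \Cref{vonStaudt}, the same dictionary identifying $\C K_{\BB R}(\C I)$ with the counterexamples to $\varphi(\C I)$, and the same binary-encoding observation for the polynomial size bound (which the paper records just before its proof). The only cosmetic difference is that the paper additionally lists the replacement $f \neq 0 \mapsto yf - 1 = 0$, a case your formulation of a basic semialgebraic set simply absorbs.
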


We have to explain the input encoding of a system of polynomials to
make sense of the ``polynomial-time algorithm'' assertion.
The size of a system of polynomials is the number of symbols used
to write it down in the rudimentary language of ordered fields,
i.e.\ using the constants $0$~and~$1$, the operations ${+}, {-}, {\cdot}$,
variables and the relations ${=}, {\not=}, {<}, {\le}, {\ge}, {>}$.
One should note that by increasing the number of variables and
equations, it is possible to assign powers of two to certain
variables and then encode coefficients and exponents efficiently
with binary coding~length, if desired. This encoding of a polynomial
system is the same that was applied in \Cref{sec:PolynomialRuler}
to model a system of equations via the von~Staudt constructions
for addition and~multiplication.

\begin{proof}[Proof of \Cref{ETR}]
Let any system of polynomial constraints $f_i \bowtie 0$ be given,
where $f_i \in \BB Z[t_1, \dots, t_k]$ and ${\bowtie} \in
\set{ {=}, {\not=}, {<}, {\le}, {\ge}, {>} }$. A standard construction
transforms this system into an equisatisfiable system of equations by
introducing for each non-equality constraint a new variable~$y$~and:
\begin{description}
\item[${\not=}$] replace $f \not= 0$ by $yf - 1 = 0$.
\item[${>}$] replace $f > 0$ by $y^2 f - 1 = 0$.
\item[${\ge}$] replace $f \ge 0$ by $f - y^2 = 0$.
\end{description}
This procedure takes polynomial time and produces an equivalent system
over Euclidean fields, since we replace a positivity constraint by an
equality to a square. In particular real-closed fields like $\BB R$
are Euclidean. The resulting equations are encoded into CI~constraints
via \Cref{vonStaudt} noting that the number of points constructed and
constraints~$\C I$ emitted by the algorithm is polynomial in the number
of basic arithmetic operations. Hence $\C I$ has polynomial size in the
coding length of the original system. As explained at the beginning of
this section, the points satisfying the polynomial system are precisely
the counterexamples to the validty of~$\varphi(\C I)$.
\end{proof}

\begin{remark}
In the opposite direction, $\GCI$ can be reduced to $\ETR$ in polynomial
time as~well by writing out the CI~constraints as a polynomial system for
a generic positive-definite matrix.
Care has to be taken to not allow the polynomials in the system to grow
too large. For~example, writing out an almost-principal minor as the
determinant of a generic $\SF{iK} \times \SF{jK}$ matrix using the
Leibniz formula takes at least $(|\SF K| + 1)!$ steps which is not
polynomial in the coding length of a CI~statement $\I{ij|K}$, which we
may suppose to be $|\SF{ijK}|$.

A polynomial-time procedure to write $\det \Sigma$ as a system of equations
employs Gaussian elimination simultaneously on rows and columns, producing
an $LDU$ decomposition of~$\Sigma$, so~that its determinant is the product
of the final diagonal elements. After the elimination of a row and column,
new variables for the remaining, modified entries of the matrix must be
introduced to avoid accumulating long polynomials.
\end{remark}

\section{Remarks}
\label{sec:Remarks}

The proofs of universality results for regular Gaussian~CI in \Cref{MacLane}
and \Cref{ETR} can be further generalized, as the tools developed in
\Cref{sec:Incidence} are largely independent of the field. The existence
proof of principally regular or positive-definite models in \Cref{ModelEval}
requires infinite fields. In the following we address some directions for
generalization and partial~results.

\subsection{Semidefinite models}
\label{sec:Semidefinite}

Positive-semidefinite matrices are the covariance matrices of singular
Gaussian distributions, whose probability mass is concentrated on a
proper subspace of $\BB R^\SF{N}$. This class of distributions has a
more involved definition of conditional independence~\cite{SimecekIndependence}:
\[
  \label{eq:CIsemi} \tag{${\CI\!}^-$}
  \text{$\I{ij|K}$ holds} \;\Leftrightarrow\; \apr{\SF{ij|L} : \Sigma} = 0,
\]
for any subset $\SF L \subseteq \SF K$ with $\pr{\SF L : \Sigma} \not= 0$
and $\rk \Sigma_\SF{L} = \rk \Sigma_\SF{K}$. The vanishing of this
almost-principal minor is independent of the choice of $\SF L$;
see~\cite[Section~A.8.3]{Studeny} and its references. This definition
generalizes~\eqref{eq:CI} for regular Gaussians. The \emph{model}
$\C K_{\BB K}^-(F)$ of a set of CI~constraints are then all
positive-semidefinite matrices which satisfy them. This model is still
semialgebraic, because for every assignment of ranks to the principal
minors of a generic symmetric matrix, there is a clear interpretation
of every CI~statement as an algebraic equation. Since rank conditions
are algebraic as well, the model is described by a union of basic
semialgebraic sets, one for each rank assignment.
However, the combinatorics of vanishing principal and almost-principal
minors and, depending on it, the interpretation of CI are much more
involved than in the regular Gaussian or principally regular case.

\pagebreak 

The CI~constraints of \Cref{Constraints} and \Cref{ConstraintsZero}
are special in this regard as the only ones which do not directly
address \emph{entries} of the matrix are of type~\ref{Constraints:IP}.
Therefore the only principal submatrix whose singularity would change
the interpretation of the constraints is $\Sigma_\SF{E}$. Indeed,
if this matrix was singular, the proofs in \Cref{sec:RulerCI}
would break down because joins and meets would no longer be unique.
As an undeserved extra of the encoding of the standard projective
basis and the following harmless additional relations, this never
happens:

\begin{enumerate}[label=($\C I$.\roman*),resume=Constraints]
\item \label{Constraints:S}
$\I{pq|}$ or $\neg\I{pq|}$ for all points $\SF p, \SF q$ of the
standard projective basis depending on whether $\IP{p,q} = 0$
or not, respectively.
\end{enumerate}
These constraints for pairs of points from $\set{\bm\infty_x,
\bm\infty_y, \bm0}$ imply that $\Sigma_\SF{E}$ is diagonal, which
does not impair the existence proofs in \Cref{ModelEval}.
The remaining combinations with $\bm1$ ensure that $\Sigma_\SF{E}$
is invertible:

\begin{lemma}
Let $F$ be a polynomial system and $\BB K$ an ordered field.
If $\Sigma \in \C K_{\BB K}^-(F)$ satisfies the additional
constraints~\ref{Constraints:S}, then $\Sigma_\SF{E}$ is
invertible.
\end{lemma}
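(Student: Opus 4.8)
The plan is to derive invertibility of $\Sigma_\SF{E}$ purely from positive-semidefiniteness of $\Sigma$ together with the coordinate constraints on the basis points, never invoking the regular-case reading of the scalar-product constraints. As a principal submatrix of the positive-semidefinite matrix $\Sigma$, the block $\Sigma_\SF{E}$ is itself positive semidefinite, so it is invertible precisely when $\ker \Sigma_\SF{E} = \set{0}$. The tool I would use is the fact that a positive-semidefinite matrix annihilates its isotropic vectors: if $w^\trans \Sigma w = 0$ then $\Sigma w = 0$. Over an ordered field this follows by expanding $(w + t u)^\trans \Sigma (w + t u) \ge 0$ as a quadratic in $t$ with vanishing constant term and observing that its linear coefficient $u^\trans \Sigma w$ must therefore vanish for every $u$.

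First I would record the coordinate pattern of the three points $\bm\infty_x$, $\bm\infty_y$ and $\bm0$. By the encoding constraints~\ref{Constraints:C}, the $\SF E$-rows of these points are nonzero multiples of the three coordinate axes: writing $c_i$ for the distinguished nonzero coordinate, the triple $(\Sigma_{\SF p \SF x}, \Sigma_{\SF p \SF y}, \Sigma_{\SF p \SF z})$ equals $(c_1,0,0)$, $(0,c_2,0)$ and $(0,0,c_3)$ for $\SF p$ labeling $\bm\infty_x$, $\bm\infty_y$ and $\bm0$, with all $c_i \not= 0$. Crucially, these constraints (like those of type~\ref{Constraints:S}) have empty conditioning set and therefore constrain entries of $\Sigma$ directly, so their interpretation under~\eqref{eq:CIsemi} agrees with the regular one irrespective of the rank of $\Sigma_\SF{E}$.

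Then I would take any $v = (v_\SF x, v_\SF y, v_\SF z) \in \ker \Sigma_\SF{E}$ and let $\tilde v \in \BB{K}^{\SF N}$ be its extension by zero outside $\SF E$. We have $\tilde v^\trans \Sigma \tilde v = v^\trans \Sigma_\SF{E} v = 0$, so the isotropy fact gives $\Sigma \tilde v = 0$. Reading off the rows indexed by $\bm\infty_x$, $\bm\infty_y$ and $\bm0$ of the equation $\Sigma \tilde v = 0$ leaves exactly $c_1 v_\SF x = c_2 v_\SF y = c_3 v_\SF z = 0$, so $v = 0$ because the $c_i$ are nonzero. Hence $\ker \Sigma_\SF{E} = \set{0}$ and $\Sigma_\SF{E}$ is invertible.

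The main obstacle is avoiding a circularity rather than any computation: the tempting route is to say that the vanishing off-diagonal relations among $\bm\infty_x$, $\bm\infty_y$, $\bm0$ force the bilinear form defined by $\Sigma_\SF{E}^{-1}$ to be diagonal, but this reading of~\ref{Constraints:IP} already presupposes that $\Sigma_\SF{E}$ is invertible. The argument above bypasses this by using only empty-conditioning constraints and the positive-semidefiniteness of $\Sigma$. I would also note that the relations of~\ref{Constraints:S} involving $\bm1$ are not needed for invertibility; they only become relevant once $\Sigma_\SF{E}$ is invertible, after which the regular interpretation of~\ref{Constraints:IP} is restored and the analysis of \Cref{sec:RulerCI} applies unchanged.
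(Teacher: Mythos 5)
Your proof is correct, and it takes a genuinely different route from the paper's. The paper stays inside the CI calculus: it runs a case analysis on $\rk \Sigma_\SF{E} \in \set{0,1,2}$, in each deficient-rank case using the semidefinite interpretation~\eqref{eq:CIsemi} to replace the imposed statement $\I{01|xyz}$ of type~\ref{Constraints:IP} by $\I{01|L}$ for a full-rank subset $\SF L \subseteq \SF{xyz}$; the zero pattern of the $\bm 0$ row then factors $\apr{\SF{01|L} : \Sigma}$ into a non-zero principal minor times the $\SF{01}$-entry, contradicting the type-\ref{Constraints:S} constraint $\neg\I{01|}$. Your argument is instead pure linear algebra over the ordered field: the isotropy fact for positive-semidefinite matrices (whose quadratic-in-$t$ proof indeed works over any ordered field) combined with only the empty-conditioning coordinate constraints~\ref{Constraints:C} for $\bm\infty_x$, $\bm\infty_y$, $\bm 0$, which, as you correctly note, constrain entries directly under~\eqref{eq:CIsemi} regardless of any rank deficiency. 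Notably, you use neither the type-\ref{Constraints:S} constraints, nor the point $\bm 1$, nor type~\ref{Constraints:IP}, so you in fact prove a slightly stronger statement in which the hypothesis ``satisfies~\ref{Constraints:S}'' is superfluous for invertibility; the paper needs those constraints because its contradiction is manufactured from $\neg\I{01|}$ together with $\I{01|xyz}$. Your route is essentially the ``diagonally spanning'' property $\colspan(B) \subseteq \colspan(A)$ of positive-semidefinite matrices that the paper itself invokes later in the finite-fields discussion, citing \cite[Theorem~1.19]{SchurComplement}: your isotropy step is the standard proof of that column-span inclusion, and since the columns of $\Sigma_{\SF{E},\SF{PL}}$ include the three scaled unit vectors $c_1 e_1, c_2 e_2, c_3 e_3$, full rank of $\Sigma_\SF{E}$ follows. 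What each approach buys: yours is more elementary, needs fewer hypotheses, and transfers verbatim to any diagonally spanning symmetric matrix over an arbitrary field (relevant to the paper's finite-field question), while the paper's stays within the CI formalism and exhibits how the constraint set itself forces regularity under the~$\CI^-$ semantics. Your warning about the circularity of reading~\ref{Constraints:IP} as ``$\Sigma_\SF{E}^{-1}$ is diagonal'' before invertibility is established is also well taken.
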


\begin{proof}
Consider the generic matrix on $\set{\SF \oo_x, \SF \oo_y, \SF 0, \SF 1,
\SF x, \SF y, \SF z}$ corresponding to the projective basis and the
homogeneous coordinates which satisfy the constraints~\ref{Constraints:C},
\ref{Constraints:IP} and \ref{Constraints:S}:
\begingroup\small
\[
  \kbordermatrix{
              & \SF \oo_x & \SF \oo_y & \SF 0 & \SF 1 &        & \SF x & \SF y & \SF z \\
    \SF \oo_x &     *     &     0     &   0   &   *   & \vrule &   *   &   0   &   0   \\
    \SF \oo_y &     0     &     *     &   0   &   *   & \vrule &   0   &   *   &   0   \\
    \SF 0     &     0     &     0     &   *   &   *   & \vrule &   0   &   0   &   *   \\
    \SF 1     &     *     &     *     &   *   &   *   & \vrule &   *   &   *   &   *   \\ \cline{2-9}
    \SF x     &     *     &     0     &   0   &   *   & \vrule &       &       &       \\
    \SF y     &     0     &     *     &   0   &   *   & \vrule &   & \Sigma_\SF{E} &   \\
    \SF z     &     0     &     0     &   *   &   *   & \vrule &       &       &
  }
\]
\endgroup
where all occuring $*$ symbols denote non-zero numbers.
The submatrix $\Sigma_\SF{E}$ may have rank $0$, $1$, $2$ or $3$.
If the rank is zero, then $\I{ab|xyz}$ is equivalent to $\I{ab|}$
by~\eqref{eq:CIsemi}, but this contradicts $\neg\I{01|} \wedge
\I{01|xyz}$. If the rank is one, we may assume by the symmetry
of the projective basis that $\Sigma_\SF{x}$ is an invertible principal
submatrix of full rank in $\Sigma_\SF{E}$. Then $\I{ab|xyz}$ is
equivalent to~$\I{ab|x}$. But $\I{01|xyz} \Leftrightarrow \I{01|x}$
means
\begingroup\small
\[
  \apr{\SF{01|x} : \Sigma} = \det \kbordermatrix{
          & \SF 1 & \SF x \\
    \SF 0 &   *   &   0   \\
    \SF x &   *   & \Sigma_\SF{x}
  } \overset{!}{=} 0,
\]
\endgroup
which is a contradiction to $\neg\I{01|}$ and $\Sigma_\SF{x}$ having
rank~one. Lastly, if the rank is two and $\Sigma_\SF{xy}$ is, without
loss of generality, an invertible principal submatrix of full rank,
then the same contradiction arises from $\I{01|xyz} \Leftrightarrow
\I{01|xy}$ and $\neg\I{01|}$, as the almost-principal minor
\begingroup\small
\[
  \apr{\SF{01|xy} : \Sigma} = \det \kbordermatrix{
          & \SF 1 &     \SF x      &     \SF y      \\
    \SF 0 &   *   &       0        &       0        \\
    \SF x &   *   & \Sigma_\SF{x}  & \Sigma_\SF{xy} \\
    \SF y &   *   & \Sigma_\SF{yx} & \Sigma_\SF{y}
  } \overset{!}{=} 0
\]
\endgroup
again factors into the non-zero $\pr{\SF{xy} : \Sigma}$ and the non-zero
$\SF{01}$-entry. Hence $\Sigma_\SF{E}$ has full rank.
\end{proof}

It follows that every model of the extended constraints interprets the
CI~statements exactly like the principally regular models and performs
the incidence geometry with a non-degenerate symmetric bilinear form.
We recover \Cref{Evaluation} and together with \Cref{ModelEval}, which
holds a~fortiori, this proves \Cref{vonStaudt} for the semidefinite model.
This answers \ref{SimecekQ} also in the semidefinite setting:

\begin{corollary} \label{SemidefiniteMacLane}
All real algebraic numbers are necessary to witness the non-emptiness
of (semidefinite) Gaussian CI~models.
\qed
\end{corollary}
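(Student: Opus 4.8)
The plan is to bootstrap the semidefinite statement from its regular-case analogue (the corollary to \Cref{MacLane}) by checking that the entire apparatus of \Cref{sec:RulerCI} survives the passage to positive-semidefinite matrices once the extra constraints~\ref{Constraints:S} are imposed. The single new ingredient is the preceding lemma, which guarantees that $\Sigma_\SF E$ is invertible for any $\Sigma \in \C K^-_{\BB K}(F)$ obeying~\ref{Constraints:S}. First I would record the structural observation that among the constraints of \Cref{ConstraintsZero} only those of type~\ref{Constraints:IP} carry a non-empty conditioning set, and that set is always $\SF E$; the types~\ref{Constraints:C}, \ref{Constraints:T}, \ref{Constraints:PL}, \ref{Constraints:I} and~\ref{Constraints:F} all condition on the empty set and hence pin down matrix entries directly. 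Under the semidefinite reading~\eqref{eq:CIsemi} the empty-conditioning statements are unaffected by any rank data, while for a type-\ref{Constraints:IP} statement $\I{ab|xyz}$ the invertibility of $\Sigma_\SF E$ forces $\SF L = \SF E$ in~\eqref{eq:CIsemi}, so the interpretation collapses to the regular one~\eqref{eq:CI} for every constraint in the encoding.

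With this collapse in place, \Cref{Model} and \Cref{Evaluation} transfer verbatim to $\C K^-$: their proofs only ever use~\eqref{eq:CIIP} with conditioning set $\SF E$ together with the non-degeneracy of the induced form $\EIP{\cdot,\cdot}$, both of which remain available. \Cref{ModelEval} holds \emph{a fortiori}, since the positive-definite witnesses it produces are in particular positive-semidefinite and, being built from the genuine standard basis with $\Sigma_\SF E$ the identity, already satisfy~\ref{Constraints:S}. Chaining these exactly as in the proof of \Cref{vonStaudt} then shows that $\C K^-_{\BB L}(\C I(f))$ is non-empty if and only if the variety of $F$ acquires an $\BB L$-rational point, now for the semidefinite model.

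To conclude, I would feed this semidefinite form of \Cref{vonStaudt} into the construction of \Cref{MacLane}. Given an arbitrary real algebraic number $\alpha$, set $\BB K = \BB Q(\alpha)$, let $f \in \BB Z[t]$ be its minimal polynomial, and form $\C I(f)$ augmented by~\ref{Constraints:S}. Then $\C K^-_{\BB L}(\C I(f))$ is non-empty precisely when $\BB L$ contains a root of $f$, i.e.\ when $\BB K \hookrightarrow \BB L$, so any realizing field must contain an element of degree $[\BB Q(\alpha):\BB Q]$ over $\BB Q$. As $\alpha$ ranges over all real algebraic numbers these degrees are unbounded, whence no fixed finite extension of $\BB Q$ can witness the non-emptiness of every satisfiable constraint set; this is the assertion of the corollary and simultaneously answers \ref{SimecekQ} negatively in the semidefinite setting.

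I expect the genuinely load-bearing step to be the interpretation collapse of the first paragraph. One must be certain that the singularity of principal submatrices supported on $\SF{PL}$, which genuinely occurs for low-rank positive-semidefinite $\Sigma$, never enters the evaluation of any imposed CI~statement. This is exactly why the encoding was arranged so that all non-trivial conditioning happens on $\SF E$ alone, and it is the reason the preceding lemma controlling $\Sigma_\SF E$ is the only extra input needed beyond the regular-case proofs.
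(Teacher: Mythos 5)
Your proposal is correct and takes essentially the same route as the paper: the preceding lemma's invertibility of $\Sigma_\SF{E}$ collapses the semidefinite interpretation~\eqref{eq:CIsemi} to the regular one for the type~\ref{Constraints:IP} statements (all other imposed constraints conditioning on the empty set and addressing entries directly), whence \Cref{Model}, \Cref{Evaluation}, \Cref{ModelEval} (a fortiori, with the standard-basis witness satisfying~\ref{Constraints:S}) and hence \Cref{vonStaudt} transfer to $\C K^-$, and the \Cref{MacLane} construction with the minimal polynomial of a primitive element finishes the argument. Your closing observation that the collapse is the load-bearing step, enabled by routing all non-trivial conditioning through $\SF E$, is exactly the paper's stated reason for the design of the encoding.
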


\begin{example}[Šimeček's model №~85]
The paper~\cite{SimecekGaussian} in which Petr Šimeček posed his
rationality question is concerned with the CI~structures realizable
by (not necessarily regular) Gaussian distributions on four variables.
For all but one of the models, №~85, he identified a rational
covariance matrix realizing it.

The CI~structure №~85 is
$\set{\I{12|4}, \I{14|3}, \I{14|23}, \I{24|3}, \I{24|13}, \I{34|12}}$.
The above \namecref{SemidefiniteMacLane} shows that there are semidefinite
CI~models without rational points, but №~85 is not one of them. The matrix
\begingroup\small
\[
  \kbordermatrix{
        &  \SF 1  &  \SF 2  &  \SF 3 &  \SF 4  \\
  \SF 1 &    1    &  -1/17  & -49/51 &  -7/17  \\
  \SF 2 &  -1/17  &    1    &   1/3  &   1/7   \\
  \SF 3 &  -49/51 &   1/3   &    1   &   3/7   \\
  \SF 4 &  -7/17  &   1/7   &   3/7  &    1
  }
\]
\endgroup
is positive-semidefinite and its vanishing principal minors are
$\pr{\SF{123} : \Sigma}$ and $\pr{\SF{1234} : \Sigma}$, which do not
affect the interpretation of any CI~statement via~\eqref{eq:CIsemi},
and the CI~structure realized by this matrix is №~85. The matrix was
found (quickly) by Wolfram~Mathematica~\cite{Mathematica} after
optimistically imposing the mentioned rank constraints and simplifying
the resulting polynomial system by hand.
\end{example}

\subsection{Finite fields}

\Cref{vonStaudt} does not apply to finite fields because principal regularity
is a restrictive condition for very small fields. For instance, the identity
matrix is the only principally regular matrix over $\GF(2)$. Principal regularity
is part of the definition of CI~model because it ensures a well-defined
interpretation of the CI~statement~$\I{ij|K}$, but afterwards this property
is almost trivially enforced in \Cref{ModelEval} and plays no role for
encoding of point and line configurations. For positive-semidefinite matrices,
there exists the more intricate definition~\eqref{eq:CIsemi}, but this does
not generalize to arbitrary symmetric matrices:

\begin{example} \label{CIsemiIlldefined}
Consider the symmetric (not positive-semidefinite) matrix
\begingroup\small
\[
  \Sigma =
  \kbordermatrix{
        & \SF i & \SF j & \SF x & \SF y & \SF z \\
  \SF i &   1   &   1   &  -1   &   0   &   0 \\
  \SF j &   1   &  -1   &  -1   &   0   &   0 \\
  \SF x &  -1   &  -1   &   1   &   0   &   1 \\
  \SF y &   0   &   0   &   0   &   2   &   0 \\
  \SF z &   0   &   0   &   1   &   0   &   1
  }.
\]
\endgroup
The interpretation of the CI~symbol~$\I{ij|xyz}$ according to~\eqref{eq:CIsemi}
is inconsistent. It depends on the choice of full-rank subset of~$\SF{xyz}$:
\begingroup\small
\begin{gather*}
  \pr{\SF{xyz} : \Sigma} = 0, \quad
  \pr{\SF{xy} : \Sigma} = 2 \not= 0, \quad
  \pr{\SF{yz} : \Sigma} = 2 \not= 0, \quad
  \pr{\SF{xz} : \Sigma} = 0, \\
  \apr{\SF{ij|xy} : \Sigma} = 2 - 2 = 0, \quad
  \apr{\SF{ij|yz} : \Sigma} = 2 \not= 0.
\end{gather*}
\endgroup
\end{example}

The proof of the well-definedness of~\eqref{eq:CIsemi} for semidefinite
matrices needs the well-definedness of the generalized Schur
complement~\cite[Section~A.8.1]{Studeny} which in turn rests
on the fact that for every block decomposition of a semidefinite matrix
$\Sigma = \begin{psmallmatrix} A & B \\ B^\trans & D \end{psmallmatrix}$
the $\colspan(B)$ is contained in $\colspan(A)$, i.e.\ there exists $B'$
such that $B = AB'$. This is shown in~\cite[Theorem~1.19]{SchurComplement}.
This $\colspan$ property on every block decomposition, for which
$\Sigma$ which may be called \emph{diagonally spanning}, is entirely
linear-algebraic and it can be defined over every field, where it ensures
the well-definedness of $\I{ij|K}$ via~\eqref{eq:CIsemi}. The matrix
in \Cref{CIsemiIlldefined} is symmetric but not diagonally spanning
as witnessed by $\begin{psmallmatrix} 0 \\ 0 \\ 1 \end{psmallmatrix} \not\in
\colspan \begin{psmallmatrix} 1 & 1 & -1 \\ 1 & -1 & -1 \\ -1 & -1 & 1
\end{psmallmatrix}$.
It is not obvious that the analogue of \Cref{ModelEval} holds for models
of diagonally spanning matrices over finite fields.

\begin{question}
Is it possible to define an interpretation of $\I{ij|K}$ for general
symmetric matrices which is consistent with semidefinite matrices?
Can the proof of \Cref{vonStaudt} be generalized?
\end{question}

Another attempt at generalization would explicitly involve vanishing
and non-vanishing statements about \emph{principal} minors in the
constraint~set. For results and open questions about the polynomial
relations among principal and almost-principal minors of a generic
symmetric matrix, see~\cite{Geometry}.

\subsection{Algebraic degrees}

The \emph{algebraic degree} of a constraint set~$\C I$ with respect
to some characteristic~$k$ (zero or a prime number)~is
$
  \deg_k \C I := \min_{\Sigma \in \C V_{\ol{\Bbbk}}(\C I)} \max_{ij} \deg_{\Bbbk} \Sigma_{ij},
$
where $\Bbbk$ is the prime field of characteristic~$k$ and $\ol{\Bbbk}$
its algebraic closure. The Lefschetz principle states that the emptiness
of a constructible set defined by integer polynomials over algebraically
closed fields does not depend on the particular field but only on the
characteristic; for a proof see~\cite[Chapter~2.4]{ModelTheory}.
Analogous to Tarski's transfer principle for real-closed fields, it
ensures the finiteness of this degree whenever a model over \emph{some}
field of the given characteristic exists.

Our techniques apply only to infinite prime fields, i.e.\ the rational
numbers, but they suffice to settle the algebraic analogue of \ref{SimecekQ}
posed in~\cite[Question~6.7]{Gaussant}:

\begin{corollary}
All algebraic numbers are necessary to witness the non-emptiness of
principally regular CI~models over $\BB Q$. In particular, for every
step in the chain $\BB Q \subseteq \BB R \subseteq \BB C$, there exist
CI~structures which are realizable over the next but not over the
previous~field.
\qed
\end{corollary}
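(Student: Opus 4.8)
The plan is to run the argument of \Cref{MacLane} and its corollary verbatim, only dropping every reference to orderings. As emphasized at the start of \Cref{sec:Remarks}, the incidence encoding of \Cref{sec:Incidence} is field-agnostic, and \Cref{vonStaudt} already records the principally regular case over \emph{all} infinite fields; this includes the non-real field $\BB C$ and the algebraic closure $\ol{\BB Q}$. The whole machinery therefore transports from the ordered setting to characteristic zero without change, and the only work is to translate ``realizable'' into a statement about algebraic numbers.

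For the main assertion, fix an arbitrary algebraic number $\alpha$ and let $\BB K = \BB Q(\alpha)$, a finite extension with minimal polynomial $f \in \BB Z[t]$. Applying \Cref{vonStaudt} to $F = \set{f}$, the constraint set $\C I(f)$ has a principally regular model over an infinite field $\BB L$ of characteristic zero if and only if the univariate $f$ has a root in $\BB L$, which by the standard correspondence $\BB Q[t]/(f) \cong \BB K$ happens if and only if there is an embedding $\BB K \hookrightarrow \BB L$. Over $\ol{\BB Q}$ such a root certainly exists, so $\C V_{\ol{\BB Q}}(\C I(f))$ is non-empty and $\deg_0 \C I(f)$ is finite; conversely, \Cref{Evaluation} together with constraint~\ref{Constraints:F} shows that every model recovers a genuine root of $f$ from its entries, so the field generated by any model contains a conjugate of $\alpha$. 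In particular no model exists over $\BB Q$ once $[\BB K : \BB Q] > 1$, and as $\alpha$ ranges over all algebraic numbers the degree $[\BB K : \BB Q]$ is unbounded; this is the precise content of ``all algebraic numbers are necessary''.

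The chain statement is then a matter of exhibiting two explicit polynomials. For the step $\BB Q \subseteq \BB R$ take $f = t^2 - 2$: it has no rational root but the real root $\sqrt 2$, so by \Cref{vonStaudt} the set $\C I(f)$ is realizable over $\BB R$ but not over $\BB Q$. For the step $\BB R \subseteq \BB C$ take $f = t^2 + 1$, whose only roots are $\pm i$; here $\C I(f)$ is realizable over $\BB C$ but not over $\BB R$. I expect no real obstacle: the single point worth checking is that \Cref{vonStaudt} legitimately applies to the non-ordered, non-real fields $\ol{\BB Q}$, $\BB C$ and $\BB R$ --- which it does, being stated for arbitrary infinite fields --- and that the recovered root pins down the minimal field of definition, which is exactly \Cref{Evaluation}.
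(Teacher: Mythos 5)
Your proposal is correct and matches the paper's intended argument: the corollary is stated with a \qed precisely because it follows by rerunning the proof of \Cref{MacLane} with \Cref{vonStaudt}, which is already stated for arbitrary infinite fields, so dropping the ordering is exactly the point. Your instantiations $t^2-2$ and $t^2+1$ for the two steps of the chain, and the recovery of a conjugate root via \Cref{Evaluation} and constraint~\ref{Constraints:F}, are the same mechanism the paper relies on.
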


\subsection{Existential theories}

Denote by $\ETK$ the problem to decide whether a constructible set defined
by integer polynomials over~$\BB K$ is empty or not and by $\ETK_{\le}$
the corresponding problem for semialgebraic sets over an ordered field.
We have analogous versions of the implication problem $\GCI(\BB K)$ and
$\GCI(\BB K_{\le})$.

\begin{corollary}
\begin{inparaenum}[label=(\arabic*)]
\item
$\ETK$ reduces to $\GCI(\BB K)$ when $\BB K$ is infinite.

\item
$\ETK_{\le}$ reduces to $\GCI(\BB K_{\le})$ whenever $\BB K$ is a
Euclidean ordered field.

\item
$\ETK$ and $\ETK_{\le}$ and hence $\GCI(\BB K)$ and $\GCI(\BB K_{\le})$
are decidable over algebraically and real-closed fields, respectively.
\end{inparaenum}
\qed
\end{corollary}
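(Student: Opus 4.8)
The plan is to read parts~(1) and~(2) as the field-relative versions of \Cref{ETR}: its proof already produces the reduction, and the only thing to check is that every ingredient --- most importantly \Cref{vonStaudt} --- survives the passage from~$\BB R$ to a general infinite, respectively Euclidean, field. Part~(3) I would then obtain by pairing classical effective quantifier elimination with the converse reduction sketched in the remark following \Cref{ETR}.

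For part~(1) I would take an $\ETK$-instance, a constructible set over~$\BB K$ cut out by integer polynomials, and first fold it into a single conjunctive system by the usual Boolean encodings --- a product $fg$ for a disjunction $f=0\vee g=0$, a fresh variable with $yg-1=0$ for an inequation $g\neq0$, and the linear-combination trick $y_1g_1+y_2g_2-1=0$ for a disjunction of inequations --- none of which uses more than the field axioms, and all of which preserve $\BB K$-solvability. This yields a pure equation system~$F$ over~$\BB K$. By \Cref{vonStaudt} the constraint set~$\C I(F)$ has a principally regular model over the infinite field~$\BB K$ exactly when~$F$ has a $\BB K$-point; and since $\C V_{\BB K}(\C I(F))$ is by construction the set of counterexamples to $\varphi(\C I(F))$, that formula is valid precisely when the original constructible set is empty. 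This map is the sought reduction of~$\ETK$ to~$\GCI(\BB K)$.

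Part~(2) is identical in outline but takes a semialgebraic input over an ordered field, so the front end must also discharge the order relations, using the full substitution table of \Cref{ETR}: $g\neq0\mapsto yg-1$, $f>0\mapsto y^2f-1$, and $f\ge0\mapsto f-y^2$. The Euclidean hypothesis enters here and is not cosmetic: $y^2f=1$ is solvable over~$\BB K$ iff $1/f$ is a square, and $f=y^2$ iff $f$ is a square, so these equalities faithfully encode $f>0$ and $f\ge0$ only when every nonnegative element of~$\BB K$ is a square. Feeding the resulting equisatisfiable equation system into the positive-model half of \Cref{vonStaudt} produces~$\C I(F)$ whose implication formula is valid iff the semialgebraic set is empty, which is the reduction of~$\ETK_{\le}$ to~$\GCI(\BB K_{\le})$.

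For part~(3) I would invoke the standard decidability facts: the existential theory of an algebraically closed field of fixed characteristic is decidable by elimination theory (the same Lefschetz-principle apparatus cited earlier), so $\ETK$ is decidable, and over a real-closed field $\ETK_{\le}$ is decidable by Tarski--Seidenberg. The decidability of $\GCI(\BB K)$ and $\GCI(\BB K_{\le})$ then follows from the converse reductions of the preceding remark --- expanding the CI~constraints for a generic symmetric, respectively positive-definite, matrix into a polynomial system sends $\GCI$ back to $\ETK$, respectively $\ETK_{\le}$. I expect the only genuinely delicate point to be the failure of the two square substitutions over non-Euclidean ordered fields, which is exactly what the hypothesis in~(2) excludes; everything else is the observation that the constructions of \Cref{sec:Incidence} never used a property of~$\BB R$ beyond being an infinite, ordered, or Euclidean field as the situation demands, so the remainder is bookkeeping.
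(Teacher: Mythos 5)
Your proposal is correct and follows essentially the route the paper intends for this corollary (which it leaves unproved): the substitution table and encoding from the proof of \Cref{ETR} combined with \Cref{vonStaudt} give the reductions in parts~(1) and~(2) --- with the Euclidean hypothesis doing exactly the work you identify for the two square substitutions --- and part~(3) follows from effective quantifier elimination over algebraically and real-closed fields together with the converse reduction sketched in the remark after \Cref{ETR}. Your additional Boolean folding of constructible/semialgebraic inputs into a single conjunctive system is a standard and correct preprocessing step consistent with the paper's framework.
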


This implies that the classification problem attached to Šimeček's question,
i.e.\ to decide $\GCI(\BB Q)$, is famously open, as it is equivalent to
Hilbert's 10th problem over~$\BB Q$. For a survey of this problem,
see~\cite{DefiningIntegers}.

It is remarkable that the upper bound of $3$ can be placed on the size
of conditioning sets $\SF K$ in every CI~symbol $\I{ij|K}$ required in
the reduction of $\ETR$ to~$\GCI$ in \Cref{ETR}. On~the other hand, the
construction requires an unbounded number of each, antecedents~$\C L$
and consequents~$\C M$: antecedents to enforce incidence
relations~\ref{Constraints:IP}~and~\ref{Constraints:I} and consequents
to ensure that every point defined by the von~Staudt construction is a
valid point in projective space with constraint type~\ref{Constraints:PL}.
Prior research into infinite families of Gaussian inference rules has
usually targeted single-consequent formulas~\cite{SullivantNonfinite,SimecekNonfinite}.

\begin{question}
Is there a polynomial-time reduction of $\ETR$ to $\GCI$ for which there
is a universal upper bound on the number of~consequents in the constructed
inference formulas?
\end{question}

\subsection{Future work}

Given the encoding of projective plane incidence relations over fields
presented here, a natural direction for future work concerns skew~fields
with the same ambitions as in~\cite{vonStaudtSkew}. The more fine-grained
universality theorems mentioned in the introduction are an attractive
target as well. These concern topological properties of the realization
spaces of matroids. The analogue of matroids in this context would be
gaussoids~\cite{Geometry}, which are ``complete'' sets of CI~constraints
in the sense that each CI~statement appears in them either negated or
non-negated.
The extension to oriented gaussoids allows to impose sign constraints
on conditional dependencies. Statisically, this prescribes the signs
of correlations among random variables, and in our model of the plane
it would make the orientation of points with respect to lines expressible.

Recent work on discrete random variables~\cite{ArrangementsCI} considers
related applications of point and line configurations to the decomposition
of conditional independence ideals, in particular with hidden variables.
At~the Algebraic Statistics Online Seminar where this work was presented,
Thomas Kahle suggested that a universality theorem like \Cref{MacLane}
for discrete CI would settle a question of Matúš, which is entirely
parallel to and older than that of Šimeček:

\begin{question}[{\cite{MatusFinal}}]
Does every CI~structure which is realizable by discrete random variables
have a realization where every atomic probability is rational?
\end{question}

\subsection*{Acknowledgement}
The author would like to thank Thomas Kahle for his comments.
This~work is funded by the Deutsche Forschungsgemeinschaft
(DFG, German Research Foundation) -- 314838170, GRK 2297 MathCoRe.

\bibliographystyle{alpha}
\bibliography{gaussruler}

\end{document}